\numberwithin{equation}{section}
\theoremstyle{plain}
\newenvironment{remark}[1][Remark]{\begin{trivlist}
\item[\hskip \labelsep {\bfseries #1}]}{\end{trivlist}}
\newtheorem{theorem}{Theorem}
\newtheorem{lemma}{Lemma}
\newcommand{\Var}{\mathrm{Var}}
\newcommand{\E}{\mathrm{E}}
\begin{document}

\begin{frontmatter}

% "Title of the paper"
\title{Bayesian two-step estimation in differential equation models}
\runtitle{Bayesian two-step estimation in differential equation models}

% indicate corresponding author with \corref{}
% \author{\fnms{John} \snm{Smith}\corref{}\ead[label=e1]{smith@foo.com}\thanksref{t1}}
% \thankstext{t1}{Thanks to somebody}
% \address{line 1\\ line 2\\ printead{e1}}
% \affiliation{Some University}

\author{\fnms{Prithwish} \snm{Bhaumik}\ead[label=e1]{pbhaumi@ncsu.edu}}
\address{Department of Statistics\\North Carolina State University\\SAS Hall, 2311 Stinson Drive\\Raleigh, NC 27695-8203\\\printead{e1}}
\and
\author{\fnms{Subhashis} \snm{Ghosal}\ead[label=e2]{sghosal@ncsu.edu}}
\address{Department of Statistics\\North Carolina State University\\4276 SAS Hall, 2311 Stinson Drive\\Raleigh, NC 27695-8203\\\printead{e2}}
\affiliation{North Carolina State University}

\runauthor{P. Bhaumik and S. Ghosal}

\begin{abstract}
Ordinary differential equations (ODEs) are used to model dynamic systems appearing in engineering, physics, biomedical sciences and many other fields. These equations contain unknown parameters, say $\bm\theta$ of physical significance which have to be estimated from the noisy data. Often there is no closed form analytic solution of the equations and hence we cannot use the usual non-linear least squares technique to estimate the unknown parameters. There is a two-step approach to solve this problem, where the first step involves fitting the data nonparametrically. In the second step the parameter is estimated by minimizing the distance between the nonparametrically estimated derivative and the derivative suggested by the system of ODEs. The statistical aspects of this approach have been studied under the frequentist framework. We consider this two-step estimation under the Bayesian framework. The response variable is allowed to be multidimensional and the true mean function of it is not assumed to be in the model. We induce a prior on the regression function using a random series based on the B-spline basis functions. We establish the Bernstein-von Mises theorem for the posterior distribution of the parameter of interest. Interestingly, even though the posterior distribution of the regression function based on splines converges at a rate slower than $n^{-1/2}$, the parameter vector $\bm\theta$ is nevertheless estimated at $n^{-1/2}$ rate.
\end{abstract}

\begin{keyword}[class=MSC]
%\kwd[Primary ]{62J02}
\kwd{62J02, 62G08, 62G20, 62F15.}
%\kwd[; secondary ]{}
\end{keyword}

\begin{keyword}
\kwd{Ordinary differential equation}
%\kwd{nonparametric regression}
\kwd{Bayesian inference}
%\kwd{two step estimation}
%\kwd{asymptotic normality}
%\kwd{$\sqrt{n}$-consistency}
%\kwd{total variation distance}
\kwd{spline smoothing}
\kwd{Bernstein-von Mises theorem}
\end{keyword}
\tableofcontents
\end{frontmatter}

\section{Introduction}

Suppose that we have a regression model $\bm Y=\bm f_{\bm\theta}(t)+\bm\varepsilon,\,\bm\theta\in\Theta\subseteq\mathbb{R}^p$. The explicit form of $\bm f_{\bm\theta}(\cdot)$ may not be known, but the function is assumed to satisfy the system of ordinary differential equations (ODEs) given by
\begin{eqnarray}
\frac{d\bm f_{\bm\theta}(t)}{dt}=\bm F(t,\bm f_{\bm\theta}(t),\bm\theta),\,t\in [0,1];\label{intro}
\end{eqnarray}
here $\bm F$ is a known appropriately smooth vector valued function and $\bm\theta$ is a parameter vector controlling the regression function. Equations of this type are encountered in various branches of science such as in genetics \citep{chen1999modeling}, viral dynamics of infectious diseases
[\citet{anderson1992infectious}, \citet{nowak2000virus}]. There are numerous applications in the fields of pharmacokinetics and pharmacodynamics (PKPD) as well. There are a lot of instances where no closed form solution exist.
Such an example can be found in the feedback system \citep[page 235]{gabrielsson2000pharmacokinetic} modeled by the ODEs
\begin{eqnarray}
\frac{dR(t)}{dt}&=&{k_{\mathrm{in}}}-k_{\mathrm{out}}R(t)(1+M(t)),\nonumber\\
\frac{dM(t)}{dt}&=&{k_{\mathrm{tol}}}(R(t)-M(t)),\nonumber
\end{eqnarray}
where $R(t)$ and $M(t)$ stand for loss of response and modulator at time $t$ respectively. Here ${k_{\mathrm{in}}}, {k_{\mathrm{out}}}$ and ${k_{\mathrm{ tol}}}$ are unknown parameters which have to be estimated from the noisy observations given by
\begin{eqnarray}
Y_R(t)&=&R(t)+\varepsilon_R(t),\nonumber\\
Y_M(t)&=&M(t)+\varepsilon_M(t),\nonumber
\end{eqnarray}
$\varepsilon_R(t), \varepsilon_M(t)$ being the respective noises at time point $t$. Another popular example is the Lotka-Volterra equations, also known as predator-prey equations. The prey and predator populations change over time according to the equations
\begin{eqnarray}
\frac{df_{1\bm\theta}(t)}{dt}&=&\theta_1f_{1\bm\theta}(t)-\theta_2f_{1\bm\theta}(t)f_{2\bm\theta}(t),\nonumber\\
\frac{df_{2\bm\theta}(t)}{dt}&=&-\theta_3f_{2\bm\theta}(t)+\theta_4f_{1\bm\theta}(t)f_{2\bm\theta}(t),\,\,t\in[0,1],\nonumber
\end{eqnarray}
where $\bm\theta=(\theta_1,\theta_2,\theta_3,\theta_4)^T$ and $f_{1\bm\theta}(t)$ and $f_{2\bm\theta}(t)$ denote the prey and predator populations at time $t$ respectively.\\
\indent If the ODEs can be solved analytically, then the usual non linear least squares (NLS) [\citet{levenberg1944method}, \citet{marquardt1963algorithm}] can be used to estimate the unknown parameters. In most of the practical situations, such closed form solutions are not available as evidenced in the previous two examples. NLS was modified for this purpose by \citet{bard1974nonlinear} and \citet{domselaar1975nonlinear}. \citet[page 134]{hairer1993solving} and \citet[page 53]{mattheij2002ordinary} used the 4-stage Runge-Kutta algorithm to solve \eqref{intro} numerically. The NLS can be applied in the next step to estimate the parameters. The statistical properties of the corresponding estimator have been studied by \citet{xue2010sieve}. The strong consistency, $\sqrt{n}$-consistency and asymptotic normality of the estimator were established in their work.\\
\indent \citet{ramsay2007parameter} proposed the generalized profiling procedure where the solution is approximated by a linear combination of basis functions. The coefficients of the basis functions are estimated by solving a penalized optimization problem using an initial choice of the parameters of interest. A data-dependent fitting criterion is constructed which contains the estimated coefficients. Then ${\bm\theta}$ is estimated by the maximizer of this criterion. \citet{qi2010asymptotic} explored the statistical properties of this estimator including $\sqrt{n}$-consistency and asymptotic normality. Despite having desirable statistical properties, these approaches are computationally cumbersome especially for high-dimensional systems of ODEs as well as when $\bm\theta$ is high-dimensional.\\
\indent \citet{varah1982spline} used an approach of two-step procedure. In the first step each of the state variables is approximated by a cubic spline using least squares technique. In the second step, the corresponding derivatives are estimated by differentiating the nonparametrically fitted curve and the parameter estimate is obtained by minimizing the sum of squares of difference between the derivatives of the fitted spline and the derivatives suggested by the ODEs at the design points. This method does not depend on the initial or boundary conditions of the state variables and is computationally quite efficient. An example given in \citet{voit2004decoupling} showed the computational superiority of the two-step approach over the usual least squares technique. \citet{brunel2008parameter} replaced the sum of squares of the second step by a weighted integral of the squared deviation and proved $\sqrt{n}$-consistency as well as asymptotic normality of the estimator so obtained. The order of the B-spline basis was determined by the smoothness of $\bm F(\cdot,\cdot,\cdot)$ with respect to its first two arguments. \citet{gugushvili2012n} used the same approach but used kernel smoothing instead of spline. They also established $\sqrt{n}$-consistency of the estimator. Another modification has been made in the work of \citet{wu2012numerical}. They have used penalized smoothing spline in the first step and numerical derivatives instead of actual derivatives of the nonparametrically estimated functions. In another work \citet{brunel2013parametric} used nonparametric approximation of the true solution to \eqref{intro} and then used a set of orthogonality conditions to estimate the parameters. The $\sqrt{n}-$consistency as well as the asymptotic normality of the estimator was also established in their work.\\
\indent In ODE models Bayesian estimation was considered in the works of \citet{gelman1996physiological}, \citet{rogers2007bayesian} and \citet{girolami2008bayesian}. First they solved the ODEs numerically to approximate the expected response and hence constructed the likelihood. A prior was assigned on $\bm\theta$ and MCMC technique was used to generate samples from the posterior. Computation cost might be an issue in this case as well. \citet{campbell2012smooth} proposed the smooth functional tempering approach which is a population MCMC technique and it utilizes the generalized profiling approach \citep{ramsay2007parameter} and the parallel tempering algorithm. \citet{jaeger2009functional} also used a Bayesian analog of the generalized profiling by putting prior on the coefficients of the basis functions. The theoretical aspects of Bayesian estimation methods have not been yet explored in the literature.\\
\indent In this paper we consider a Bayesian analog of the approach of \citet{brunel2008parameter} fitting a nonparametric regression model using B-spline basis. We assign priors on the coefficients of the basis functions. A posterior has been induced on $\bm\theta$ using the posteriors of the coefficients of the basis functions. In this paper we study the asymptotic properties of the posterior distribution of $\bm\theta$ and establish a Bernstein-von Mises theorem. We allow the ODE model to be misspecified, that is, the true regression function may not be a solution of the ODE. The response variable is also allowed to be multidimensional with possibly correlated errors. Normal distribution is used as the working model for error distribution, but the true distribution of errors may be different. Interestingly, the original model is parametric but it is embedded in a nonparametric model, which is further approximated by high dimensional parametric models. Note that the slow rate of nonparametric estimation does not influence the convergence rate of the parameter in the original parametric model.\\
\indent The paper is organized as follows. Section 2 contains the description of the notations and the model as well as the priors used for the analysis. The main results are given in Section 3. We extend the results to a more generalized set up in Section 4. In Section 5 we have carried out a simulation study under different settings. Proofs of the theorems are given in Section 6. Section 7 contains the proofs of the required lemmas.
\section{Notations, model assumption and prior specification}
We describe a set of notations to be used in this paper. Boldfaced letters are used to denote vectors and matrices. For a matrix $\bm A$, the symbols $\bm A_{i,}$ and $\bm A_{,j}$ stand for the $i^{th}$ row and $j^{th}$ column of $\bm A$ respectively. The notation $(\!(A_{i,j})\!)$ stands for a matrix with $(i,j)^{th}$ element being $A_{i,j}$. We use the notation $\mathrm{rows}_{r}^s(\bm A)$ with $r<s$ to denote the sub-matrix of $\bm A$ consisting of $r^{th}$ to $s^{th}$ rows of $\bm A$. Similarly, we can define $\mathrm{cols}_{r}^s(\bm A)$ for columns. The notation $\bm x_{r:s}$ stands for the sub-vector consisting of $r^{th}$ to $s^{th}$ elements of a vector $\bm x$. By $\mathrm{vec}(\bm A)$, we mean the vector obtained by stacking the columns of the matrix $\bm A$ one over another. For an $m\times n$ matrix $\bm A$ and a $p\times q$ matrix $\bm B$, $\bm A\otimes\bm B$ denotes the Kronecker product between $\bm A$ and $\bm B$; see \citet{steeb2006problems} for the definition. The identity matrix of order $p$ is denoted by $\bm I_p$. By the symbols $\mathrm{maxeig}(\bm A)$ and $\mathrm{mineig}(\bm A)$, we denote the maximum and minimum eigenvalues of the matrix $\bm A$ respectively.
For a vector $\bm x\in\mathbb{R}^p$, we denote $\|\bm x\|=\left(\sum_{i=1}^px_i^2\right)^{1/2}$. We denote the $r^{th}$ order derivative of a function $f(\cdot)$ by $f^{(r)}(\cdot)$, that is, $f^{(r)}(t)=\frac{d^r}{dt^r}f(t)$. The boldfaced symbol $\bm f(\cdot)$ stands for a vector valued function. For functions $\bm f:[0,1]\rightarrow\mathbb{R}^p$ and $w:[0,1]\rightarrow[0,\infty)$, we define $\|\bm f\|_w=(\int_0^1 \|\bm f(t)\|^2w(t)dt)^{1/2}$. For a real-valued function $f:[0,1]\rightarrow\mathbb{R}$ and a vector $\bm x\in\mathbb{R}^p$, we denote $f(\bm x)=(f(x_1),\ldots,f(x_p))^T$. The notation $\langle\cdot,\cdot\rangle$ stands for inner product. For numerical sequences $a_n$ and $b_n$, by $a_n=o(b_n)$, we mean $a_n/b_n\rightarrow0$ as $n\rightarrow\infty$. The notation $a_n=O(b_n)$ implies that $a_n/b_n$ is bounded. We use the notation $a_n\asymp b_n$ to mean $a_n=O(b_n)$ and $b_n=O(a_n)$, while $a_n\lesssim b_n$ stands for $a_n=O(b_n)$. The symbol $a_n\gg b_n$ will mean $b_n=o(a_n)$. Similarly we can define $a_n\ll b_n$. The notation $o_P(1)$ is used to indicate a sequence of random vectors which converges in probability to zero, whereas the expression $O_P(1)$ stands for a sequence of random vectors bounded in probability. The boldfaced symbols $\bm\E(\cdot)$ and $\bm\Var(\cdot)$ stand for the mean vector and dispersion matrix respectively of a random vector. For the probability measures $P$ and $Q$ defined on $\mathbb{R}^p$, we define the total variation distance $\|P-Q\|_{TV}=\sup_{B\in\mathscr{R}^p}|P(B)-Q(B)|$, where $\mathscr{R}^p$ denotes the Borel $\sigma$-field on $\mathbb R^p$. For an open set $E$, the symbol $C^m(E)$ stands for the collection of functions defined on $E$ with first $m$ continuous partial derivatives with respect to its arguments. Now let us consider the formal description of the model.\\
\indent We have a system of $d$ ordinary differential equations given by
\begin{eqnarray}
\frac{df_{j\bm\theta}(t)}{dt}=F_j(t,\bm f_{\bm\theta}(t),\bm\theta),\,t\in[0,1],\,j=1,\ldots,d,\label{diff}
\end{eqnarray}
where $\bm f_{\bm\theta}(\cdot)=(f_{1\bm\theta}(\cdot),\ldots,f_{d\bm\theta}(\cdot))^T$ and $\bm\theta\in\bm\Theta$, where we assume that $\bm\Theta$ is a compact subset of $\mathbb{R}^p$. Let us denote $\bm F(\cdot,\cdot,\cdot)=(F_1(\cdot,\cdot,\cdot),\ldots,F_d(\cdot,\cdot,\cdot))^T$. We also assume that for a fixed $\bm\theta$, $\bm F\in C^{m-1}((0,1),\mathbb{R}^d)$ for some integer $m\geq 1$. Then, by successive differentiation of the right hand side of \eqref{diff}, it follows that $\bm f_{\bm\theta}\in C^m((0,1))$. By the implied uniform continuity, the function and its several derivatives uniquely extend to continuous functions on $[0,1]$.\\
\indent Consider an $n\times d$ matrix of observations $\bm Y$ with $Y_{i,j}$ denoting the measurement taken on the $j^{th}$ response at the point $x_i,\,0\leq x_i\leq 1,\, i=1,\ldots,n;\,j=1,\ldots,d$. Denoting $\bm\varepsilon=(\!(\varepsilon_{i,j})\!)$ as the corresponding matrix of errors, the proposed model is given by
\begin{eqnarray}
Y_{i,j}=f_{j\bm\theta}(x_i)+\varepsilon_{i,j},\,i=1,\ldots,n,\,j=1,\ldots,d,\label{prop}
\end{eqnarray}
while the data is generated by the model
\begin{eqnarray}
Y_{i,j}=f_{j0}(x_i)+\varepsilon_{i,j},\,i=1,\ldots,n,\,j=1,\ldots,d,\label{true}
\end{eqnarray}
where $\bm f_0(\cdot)=(f_{10}(\cdot),\ldots,f_{d0}(\cdot))^T$ denotes the true mean vector which does not necessarily lie in $\{\bm f_{\bm\theta}:\bm\theta\in\bm\Theta\}$. We assume that $\bm f_0\in C^m([0,1])$. Let $\varepsilon_{i,j}\stackrel{iid}\sim P_0$, which is a probability distribution with mean zero and finite variance $\sigma_0^2$ for $i=1,\ldots,n\,;j=1,\ldots,d$.\\
Since the expression of $\bm f_{\bm\theta}$ is usually not available, the proposed model is embedded in nonparametric regression model
\begin{eqnarray}
\bm Y=\bm X_n\bm B_n+\bm\varepsilon,\label{np}
\end{eqnarray}
where $\bm X_n=(\!(N_{j}(x_i))\!)_{1\leq i\leq n,1\leq j\leq k_n+m-1}$, $\{N_{j}(\cdot)\}_{j=1}^{k_n+m-1}$ being the B-spline basis functions of order $m$ with $k_n-1$ interior knots. Here we denote $\bm B_n=\left(\bm\beta_1^{(k_n+m-1)\times 1},\ldots,\bm\beta_d^{(k_n+m-1)\times 1}\right)$, the matrix containing the coefficients of the basis functions. Also we consider $P_0$ to be unknown and use $N(0,\sigma^2)$ as the working distribution for the error where $\sigma$ may be treated as another unknown parameter. Let us denote by $t_1,\ldots,t_{k_n-1}$ the set of interior knots with $t_l=l/{k_n}$ for $l=1,\ldots,k_n-1$. Hence the meshwidth is $\xi_n={1}/{k_n}$. Denoting by $Q_n$, the empirical distribution function of $x_i,\,i=1,\ldots,n$, we assume
\begin{eqnarray}
\sup_{t\in[0,1]}|Q_n(t)-t|=o(k_n^{-1}).\nonumber
\end{eqnarray}
Let the prior distribution on the coefficients be given by
\begin{equation}
\bm\beta_j\stackrel{iid}\sim N_{k_n+m-1}(\bm 0,nk_n^{-1}(\bm X^T_n\bm X_n)^{-1}).\label{prior}
\end{equation}
Simple calculation yields the posterior distribution for $\bm\beta_j$ as
\begin{eqnarray}
\bm\beta_j|\bm Y\sim N_{k_n+m-1}\left(c^{-1}_n{({\bm X^T_n\bm X_n})}^{-1}{\bm X^T_n\bm Y_{,j}},c^{-1}_n\sigma^2(\bm X^T_n\bm X_n)^{-1}\right)\label{posterior}
\end{eqnarray}
and the posterior distributions of $\bm\beta_j$ and $\bm\beta_{j'}$ are mutually independent for $j\neq j';\,j,j'=1,\ldots,d$, where $c_n=(1+\sigma^2k_n/n)$. By model \eqref{np}, the expected response vector at a point $t\in[0,1]$ is given by $\bm B^T_n\bm N(t)$, where $\bm N(\cdot)=(N_{1}(\cdot),\ldots,N_{k_n+m-1}(\cdot))^T$.\\
\indent Let $w(\cdot)$ be a continuous weight function with $w(0)=w(1)=0$ and be positive on $(0,1)$. We define
\begin{eqnarray}
R_{\bm f}(\bm\eta)&=&\left\{\int_0^1\|\bm f'(t)-\bm F(t,\bm f(t),\bm\eta)\|^2w(t)dt\right\}^{1/2},\nonumber\\
\bm\psi(\bm f)&=&\arg\min_{\bm\eta\in\bm\Theta}R_{\bm f}(\bm\eta)\label{argmin}.
\end{eqnarray}
It is easy to check that $\bm\psi(\bm f_{\bm\eta})=\bm\eta$ for all $\bm\eta\in\Theta$. Thus the map $\bm\psi$ extends the definition of the parameter $\bm\theta$ beyond the model. Let us define $\bm\theta_0=\bm\psi(\bm f_0)$. We assume that $\bm\theta_0$ lies in the interior of $\bm\Theta$.
From now on, we shall write $\bm\theta$ for $\bm\psi(\bm f)$ and treat it as the parameter of interest. A posterior is induced on $\bm\Theta$ through the mapping $\bm\psi$ acting on $\bm f(\cdot)=\bm B^T_n\bm N(\cdot)$ and the posterior of $\bm B_n$ given by \eqref{posterior}.
\section{Main results}
Our objective is to study the asymptotic behavior of the posterior distribution of $\sqrt{n}(\bm\theta-\bm\theta_0)$. The asymptotic representation of $\sqrt{n}(\bm\theta-\bm\theta_0)$ is given by the next theorem under the assumption that
\begin{equation}
\text{for all}\,\epsilon>0,\,\,\inf_{\bm\eta:\|\bm\eta-\bm\theta_0\|\geq\epsilon}R_{\bm f_0}(\bm\eta)>R_{\bm f_0}(\bm\theta_0).\label{assmp}
\end{equation}
We denote $D_{l,r,s}\bm F(t,\bm f,\bm\theta)={\partial^{l+r+s}}/{\partial\bm\theta^s\partial\bm f^r\partial t^l}\bm F(t,\bm f(t),\bm\theta)$. Since the posterior distributions of $\bm\beta_j$ are mutually independent when $\bm\varepsilon_{,j}$ are mutually independent for $j=1,\ldots,d$, we can assume $d=1$ in Theorem 1 for the sake of simplicity in notation and write $f(\cdot)$, $f_0(\cdot)$, $F(\cdot,\cdot,\cdot)$, $\bm\beta$ instead of $\bm f(\cdot)$, $\bm f_0(\cdot)$, $\bm F(\cdot,\cdot,\cdot)$ and $\bm B_n$ respectively. Extension to $d$-dimensional case is straightforward as shown in Remark 4 after the statement of Theorem 1. We deal with the situation of correlated errors in Section 4.
\begin{theorem}\label{thm1}
Let the matrix
\begin{eqnarray}
\bm J_{\bm\theta_0}&=&\int_0^1(D_{0,0,1} F(t, f_0(t),\bm\theta_0))^TD_{0,0,1} F(t, f_0(t),\bm\theta_0)w(t)dt\nonumber\\
&&-\int_0^1\left(D_{0,0,1}\bm S(t, f_0(t),\bm\theta_0)\right)w(t)dt\nonumber
\end{eqnarray}
be nonsingular, where
\begin{eqnarray}
\bm S(t, f(t),\bm\theta)=(D_{0,0,1} F(t, f(t),\bm\theta))^T(f'_0(t)-F(t, f_0(t),\bm\theta_0)).\nonumber
\end{eqnarray}
Let $m$ be an integer greater than or equal to $5$ and $n^{1/2m}\ll k_n\ll n^{1/8}$. If $D_{0,2,1}F(t, y,\bm\theta)$ and $D_{0,0,2}F(t, y,\bm\theta)$ are continuous in their arguments, then under the assumption \eqref{assmp}, there exists $E_n\subseteq C^m((0,1))\times\bm\Theta$ with $\Pi(E^c_n|\bm Y)=o_{P_0}(1)$, such that uniformly for $(f, \bm\theta)\in E_n$,
\begin{eqnarray}
\|\sqrt{n}(\bm\theta-\bm\theta_0)-\bm{J}_{\bm\theta_0}^{-1}\sqrt{n}(\bm\Gamma(f)-\bm\Gamma(f_{0}))\|\rightarrow 0\label{thm1}
\end{eqnarray}
as $n\rightarrow\infty$, where
\begin{align}
\bm\Gamma(z)=&\int_0^1 \left(-(D_{0,0,1} F(t, f_0(t),\bm\theta_0))^TD_{0,1,0} F(t, f_0(t),\bm\theta_0)w(t)\right.\nonumber\\
&\left.-\frac{d}{dt}[(D_{0,0,1} F(t, f_0(t),\bm\theta_0))^Tw(t)]+\left(D_{0,1,0}\bm S(t, f_0(t),\bm\theta_0)\right)w(t)\right)z(t)dt.\nonumber
\end{align}
\end{theorem}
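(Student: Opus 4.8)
The plan is to treat $\bm\theta=\bm\psi(f)$ as the solution of the stationarity equation for the criterion $R_f^2$ in \eqref{argmin} and to carry out a one–step von Mises / implicit–function expansion of it around $(f_0,\bm\theta_0)$. Since $\bm\theta_0$ is interior to $\bm\Theta$ and $w$ is continuous, $\bm\psi(f)$ satisfies $\bm\Psi(f,\bm\psi(f))=\bm 0$, where
\[
\bm\Psi(f,\bm\eta)=\int_0^1\bigl(f'(t)-F(t,f(t),\bm\eta)\bigr)\bigl(D_{0,0,1}F(t,f(t),\bm\eta)\bigr)^{T}w(t)\,dt,
\]
and in particular $\bm\Psi(f_0,\bm\theta_0)=\bm 0$. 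I would carry the expansion out on a set $E_n$ of pairs $(f,\bm\psi(f))$, with $f=\bm\beta^{T}\bm N$, that receives posterior probability $1-o_{P_0}(1)$, and show that the nonlinear remainder, multiplied by $\sqrt n$, vanishes uniformly there.

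First I would prove a crude consistency statement: $\bm\psi(f)\to\bm\theta_0$ uniformly over the relevant $f$'s, using that $R_f(\bm\eta)\to R_{f_0}(\bm\eta)$ uniformly in $\bm\eta\in\bm\Theta$ whenever $\|f-f_0\|_\infty+\|f'-f_0'\|_\infty\to 0$, together with the well–separation assumption \eqref{assmp}; this legitimises the stationarity equation and confines $\bm\psi(f)$ to a shrinking ball around $\bm\theta_0$. Next I would write $\bm 0=\{\bm\Psi(f,\bm\theta)-\bm\Psi(f_0,\bm\theta)\}+\{\bm\Psi(f_0,\bm\theta)-\bm\Psi(f_0,\bm\theta_0)\}$. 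A mean–value step in $\bm\eta$ on the second bracket, with $D_{0,0,2}F$ continuous, turns it into $-\bm J_{\bm\theta_0}(\bm\theta-\bm\theta_0)+O(\|\bm\theta-\bm\theta_0\|^2)$; here one checks $\partial_{\bm\eta}\bm\Psi(f_0,\bm\theta_0)=-\bm J_{\bm\theta_0}$, using $D_{0,0,1}\bm S=(D_{0,0,2}F)(f_0'-F)$. A first–order Taylor expansion in $f$ on the first bracket yields $\bm\Gamma(f-f_0)$: the piece $\int_0^1 (f'-f_0')(t)(D_{0,0,1}F(t,f_0(t),\bm\theta_0))^{T}w(t)\,dt$ is integrated by parts, the boundary terms vanishing because $w(0)=w(1)=0$, which gives the $\tfrac{d}{dt}[(D_{0,0,1}F)^{T}w]$ term of $\bm\Gamma$, while differentiating $F$ and $D_{0,0,1}F$ in $f$ and recognising $(f_0'-F)(D_{0,1,1}F)^{T}=D_{0,1,0}\bm S$ produces the other two. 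After an integration by parts in each monomial $z'z^{k}$ ($z=f-f_0$), the remainder of this expansion is bounded, thanks to boundedness on compacta of $D_{0,2,1}F$ and $D_{0,0,2}F$, by a constant times $\|f-f_0\|_\infty^2$, products of $\|f-f_0\|_\infty$ with sup–norm fluctuations of derivatives of $f$, and $\|f-f_0\|_\infty\|\bm\theta-\bm\theta_0\|$. Solving (using $\bm J_{\bm\theta_0}$ nonsingular) and bootstrapping $\|\bm\theta-\bm\theta_0\|=O(\|f-f_0\|_\infty)$ from the leading term,
\[
\sqrt n(\bm\theta-\bm\theta_0)=\bm J_{\bm\theta_0}^{-1}\sqrt n\bigl(\bm\Gamma(f)-\bm\Gamma(f_0)\bigr)+\sqrt n\cdot\mathrm{Rem}.
\]

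For the set $E_n$ and the rates I would use that \eqref{posterior} is an explicit Gaussian, that $\bm X_n^{T}\bm X_n$ has eigenvalues of order $n/k_n$, and that the B–spline basis obeys $\|\bm N(t)\|=O(1)$, $\|\bm N^{(r)}(t)\|=O(k_n^{r})$; this gives, with posterior probability tending to one, $\|f^{(r)}-f_0^{(r)}\|_\infty=O(\sqrt{k_n^{2r+1}\log k_n/n}+k_n^{-(m-r)})$ for the relevant $r$, whereas the smooth linear functional $\bm\Gamma$ has the faster fluctuation $O(n^{-1/2})$ with a bias $O(k_n^{-m})$. Taking $E_n$ to be the event that all these bounds and $\bm\psi(f)\to\bm\theta_0$ hold, one finds $\sqrt n(\bm\Gamma(f)-\bm\Gamma(f_0))=O_{P_0}(1)$ once $k_n\gg n^{1/2m}$, and each term of $\sqrt n\cdot\mathrm{Rem}$ vanishes precisely when $m\ge 5$, $n^{1/2m}\ll k_n$ (to absorb $\sqrt n$ times the $k_n^{-m}$–type biases and approximation errors) and $k_n\ll n^{1/8}$ (to absorb $\sqrt n$ times the dominant product of derivative fluctuations in $\mathrm{Rem}$); the condition $m\ge 5$ is exactly what makes the window $n^{1/2m}\ll k_n\ll n^{1/8}$ nonempty.

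The main obstacle is precisely the uniform control of $\mathrm{Rem}$ over a neighbourhood that shrinks only at the nonparametric rate: one must show $\sqrt n\,\mathrm{Rem}=o(1)$ even though $\|f-f_0\|_\infty\gg n^{-1/2}$. This works only because (i) every occurrence of $z'=f'-f_0'$ in a nonlinear term of $\bm\Psi$ can be integrated by parts against $w$—vanishing at the endpoints—into a factor of $z$ at the cost of one derivative falling on a smooth coefficient, so $\mathrm{Rem}$ stays essentially quadratic in $\|f-f_0\|_\infty$ up to mild extra factors of higher–order fluctuations; and (ii) the surviving linear term $\bm\Gamma$ depends on $z$ but not on $z'$, so under \eqref{posterior} it behaves like an average with Gaussian fluctuation $O_{P_0}(n^{-1/2})$ and spline bias $O(k_n^{-m})$. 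Making the bootstrap $\|\bm\theta-\bm\theta_0\|=O(\|f-f_0\|_\infty)$ rigorous before it is used, and keeping all constants uniform over $E_n$, are the remaining technical points that need care.
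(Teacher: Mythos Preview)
Your proposal is correct and follows essentially the same route as the paper: both proofs localize via \eqref{assmp}, subtract the two first–order conditions $\bm\Psi(f,\bm\theta)=\bm\Psi(f_0,\bm\theta_0)=\bm0$, extract the Jacobian $\bm J_{\bm\theta_0}$ and the linear functional $\bm\Gamma$ (with the crucial integration by parts against $w$ to eliminate $f'-f_0'$ from the leading term), and bound the quadratic remainder by $\sqrt n(\|f-f_0\|_\infty^2+\|f'-f_0'\|_\infty^2)$ using the posterior concentration of Lemma~2. The only cosmetic difference is the order of decomposition—your split $\{\bm\Psi(f,\bm\theta)-\bm\Psi(f_0,\bm\theta)\}+\{\bm\Psi(f_0,\bm\theta)-\bm\Psi(f_0,\bm\theta_0)\}$ versus the paper's $\bm M(f,\bm\theta)(\bm\theta-\bm\theta_0)=\bm T_{1n}+\bm T_{2n}+\bm T_{3n}$—and the paper simply bounds the mixed term $\int(D_{0,0,1}F(f)-D_{0,0,1}F(f_0))^T(f'-f_0')w\,dt$ by $\|f-f_0\|_\infty^2+\|f'-f_0'\|_\infty^2$ directly rather than integrating by parts again, so your extra IBP is not needed (and would anyway require a bit more regularity in $t$ than the theorem states).
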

\begin{remark}[Remark 1:]
Condition \eqref{assmp} implies that $\bm\theta_0$ is the unique point of minimum of $R_{\bm f_0}(\cdot)$ and $\bm\theta_0$ should be a well-separated point of minimum.
\end{remark}
\begin{remark}[Remark 2:]
The posterior distribution of $\bm\Gamma(\bm f)-\bm\Gamma(\bm f_0)$ contracts at $\bm 0$ at the rate $n^{-1/2}$ as indicated by Lemma 4. Hence, the posterior distribution of $(\bm\theta-\bm\theta_0)$ contracts at $\bm 0$ at the rate $n^{-1/2}$ with high probability under the truth. We refer to Theorem 2 for a more refined version of this result.
\end{remark}
\begin{remark}[Remark 3:]
We note that a minimum of fifth order smoothness of the true mean function is good enough to ensure the contraction rate $n^{-1/2}$. We do not gain anything more by assuming a higher order of smoothness. For $m=5$, the required condition becomes $n^{1/10}\ll k_n\ll n^{1/8}$. Also, the knots are chosen deterministically and there is no need to assign a prior on the number of terms of the random series used. Hence, the issue of Bayesian adaptation, that is, improving convergence rate with higher smoothness without knowing the smoothness, does not arise in the present context.
\end{remark}
\begin{remark}[Remark 4:]
When the response is a $d$-dimensional vector, \eqref{thm1} holds with the scalars being replaced by the corresponding $d$-dimensional vectors. Let $\bm A(t)$ stands for the $p\times d$ matrix
\begin{eqnarray}
\lefteqn{\bm J_{\bm\theta_0}^{-1}\{-(D_{0,0,1}\bm F(t,\bm f_0(t),\bm\theta_0))^TD_{0,1,0}\bm F(t,\bm f_0(t),\bm\theta_0)w(t)}\nonumber\\
&&-\frac{d}{dt}[(D_{0,0,1}\bm F(t,\bm f_0(t),\bm\theta_0))^Tw(t)]\nonumber\\
&&+\left(D_{0,1,0}\bm S(t,\bm f_0(t),\bm\theta_0)\right)w(t)\}.\nonumber
\end{eqnarray}
Then we have
\begin{eqnarray}
\bm J_{\bm\theta_0}^{-1}\bm\Gamma(\bm f)
=\sum_{j=1}^d\int_0^1\bm A_{,j}(t)\bm N^T(t)\bm\beta_jdt
=\sum_{j=1}^d\bm G_{n,j}^T\bm\beta_j,\label{linearize}
\end{eqnarray}
where $\bm G_{n,j}^T=\int_0^1\bm A_{,j}(t)\bm N^T(t)dt$ which is a $p\times (k_n+m-1)$ matrix for $j=1,\ldots,d$. Then in order to approximate the posterior distribution of $\bm\theta$, it suffices to study the asymptotic posterior distribution of the linear combination of $\bm\beta_j$ given by \eqref{linearize}. The next theorem describes the approximate posterior distribution of $\sqrt{n}(\bm\theta-\bm\theta_0)$.
\end{remark}

\begin{theorem}
Define
\begin{eqnarray}
\bm\mu_n&=&\sqrt{n}\sum_{j=1}^d\bm G_{n,j}^T(\bm X^T_n\bm X_n)^{-1}\bm X^T_n\bm Y_{,j}-\sqrt{n}\bm J_{\bm\theta_0}^{-1}\bm\Gamma(\bm f_0),\nonumber\\
\bm\Sigma_n&=&n\sum_{j=1}^d\bm G_{n,j}^T(\bm X^T_n\bm X_n)^{-1}\bm G_{n,j}\nonumber
\end{eqnarray}
and $\bm B_j=\left(\!\left(\left<A_{k,j}(\cdot),A_{k',j}(\cdot)\right>\right)\!\right)_{k,k'=1,\ldots,p}$ for $j=1,\ldots,d$. If $\bm B_j$ is non-singular for all $j=1,\ldots,d$, then under the conditions of Theorem 1,
\begin{eqnarray}
\left\|\Pi\left(\sqrt{n}(\bm\theta-\bm\theta_0)\in\cdot|\bm Y\right)-N\left(\bm\mu_n,\sigma^2\bm\Sigma_n\right)\right\|_{TV}=o_{P_0}(1).\label{thm2}
\end{eqnarray}
\end{theorem}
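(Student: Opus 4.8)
By \eqref{linearize} in Remark~4, writing $\bm f=\bm B_n^T\bm N(\cdot)$ we have $\bm J_{\bm\theta_0}^{-1}\bm\Gamma(\bm f)=\sum_{j=1}^d\bm G_{n,j}^T\bm\beta_j$, so Theorem~1 supplies a set $E_n$ with $\Pi(E_n^c|\bm Y)=o_{P_0}(1)$ on which $\sqrt{n}(\bm\theta-\bm\theta_0)=\bm L_n+\bm r_n$, $\sup_{E_n}\|\bm r_n\|\to0$, where
\[
\bm L_n=\sqrt{n}\sum_{j=1}^d\bm G_{n,j}^T\bm\beta_j-\sqrt{n}\bm J_{\bm\theta_0}^{-1}\bm\Gamma(\bm f_0)
\]
is \emph{affine} in $\bm B_n$ and is exactly the linearization, along $\bm B_n\mapsto\bm B_n^T\bm N(\cdot)$, of $\sqrt{n}(\bm\psi(\cdot)-\bm\theta_0)$ at $\bm f_0$. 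The plan is (i) to identify the posterior law of $\bm L_n$ and match it with $N(\bm\mu_n,\sigma^2\bm\Sigma_n)$, and (ii) to argue that the nonlinear remainder $\bm r_n$ and the event $E_n^c$ do not affect the total variation limit.

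For (i): conditionally on $\bm Y$ the $\bm\beta_j$ are independent with law \eqref{posterior}, so $\bm L_n|\bm Y\sim N_p(\bm\nu_n,c_n^{-1}\sigma^2\bm\Sigma_n)$ with $\bm\Sigma_n$ as in the statement and $\bm\nu_n=c_n^{-1}\sqrt{n}\sum_j\bm G_{n,j}^T(\bm X^T_n\bm X_n)^{-1}\bm X^T_n\bm Y_{,j}-\sqrt{n}\bm J_{\bm\theta_0}^{-1}\bm\Gamma(\bm f_0)$. Since $k_n=o(n)$ we have $c_n=1+\sigma^2k_n/n\to1$, and $\bm\nu_n-\bm\mu_n=(c_n^{-1}-1)\sqrt{n}\sum_j\bm G_{n,j}^T(\bm X^T_n\bm X_n)^{-1}\bm X^T_n\bm Y_{,j}=O_{P_0}(k_n/\sqrt{n})=o_{P_0}(1)$, because $|c_n^{-1}-1|\lesssim k_n/n$ and $\sum_j\bm G_{n,j}^T(\bm X^T_n\bm X_n)^{-1}\bm X^T_n\bm Y_{,j}$ is a consistent estimator of $\bm J_{\bm\theta_0}^{-1}\bm\Gamma(\bm f_0)$ (Lemma~4). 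For the covariance, the auxiliary lemmas (spectral bounds for the B-spline Gram matrix $\bm X_n^T\bm X_n$, together with $\sup_t|Q_n(t)-t|=o(k_n^{-1})$) give $n(\bm X^T_n\bm X_n)^{-1}$ the behaviour of the inverse of a spline mass matrix, and a reproducing-kernel/Riemann-sum argument then yields $\bm\Sigma_n\to\sum_{j=1}^d\bm B_j$, which is nonsingular because each $\bm B_j=((\langle A_{k,j}(\cdot),A_{k',j}(\cdot)\rangle))$ is; hence the eigenvalues of $\bm\Sigma_n$ stay bounded away from $0$ and $\infty$. Consequently the total variation distance between $N(\bm\nu_n,c_n^{-1}\sigma^2\bm\Sigma_n)$ and $N(\bm\mu_n,\sigma^2\bm\Sigma_n)$, which is controlled by $|c_n^{-1}-1|$ and $\|\bm\Sigma_n^{-1/2}(\bm\nu_n-\bm\mu_n)\|$, is $o_{P_0}(1)$.

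Step (ii) is the main obstacle: one must convert the pointwise identity $\sqrt{n}(\bm\theta-\bm\theta_0)=\bm L_n+\bm r_n$ on $E_n$ into a total variation statement, and the bare bound $\|\bm r_n\|=o(1)$ does \emph{not} suffice --- for fixed $\delta>0$ the enlargement quantity $\sup_B\{\Pi(\bm L_n\in B^{\delta}|\bm Y)-\Pi(\bm L_n\in B|\bm Y)\}$ need not be small. What saves the argument is that $\bm r_n$ is $\sqrt{n}$ times the second-order Taylor remainder, at $\bm f_0$, of $\bm\psi$ composed with the linear map $\bm B_n\mapsto\bm B_n^T\bm N(\cdot)$: the map $\bm\psi$ is smooth near $\bm f_0$ by the implicit function theorem applied to the stationarity equation $\int_0^1(D_{0,0,1}\bm F(t,\bm f(t),\cdot))^T(\bm f'(t)-\bm F(t,\bm f(t),\cdot))w(t)\,dt=\bm 0$, whose $\bm\theta$-Jacobian at $(\bm f_0,\bm\theta_0)$ is (up to sign) the nonsingular matrix $\bm J_{\bm\theta_0}$. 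Hence, on $E_n$, $\bm B_n\mapsto\sqrt{n}(\bm\theta-\bm\theta_0)$ is a $C^1$ near-identity perturbation of the affine map $\bm L_n$: its sup-norm distance from $\bm L_n$ is of order $\sqrt{n}$ times the squared $C^1$-contraction rate of $\bm f$, and its derivative, measured in the standardized coordinates of the Gaussian posterior (where a direction is multiplied by $(\bm X_n^T\bm X_n)^{-1/2}$, of operator norm $\asymp\sqrt{k_n/n}$, and differentiating a B-spline costs a factor $\asymp k_n$), is also $o(1)$; the conditions $n^{1/2m}\ll k_n\ll n^{1/8}$ and $m\ge5$ of Theorem~1 are what make both vanish, the upper bound $k_n\ll n^{1/8}$ being precisely what kills the $\sqrt{n}$-order sup-norm of $\bm r_n$. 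A change-of-variables computation then shows that the push-forward of the Gaussian posterior under this map, restricted to $E_n$, has Lebesgue density within $o(1)$ in $L^1$ of that of $N(\bm\nu_n,c_n^{-1}\sigma^2\bm\Sigma_n)$; and since the target is non-degenerate with uniformly bounded density, the leftover mass $\Pi(E_n^c|\bm Y)=o_{P_0}(1)$ contributes negligibly. Combined with (i) this gives \eqref{thm2}. The point demanding most care is controlling the Jacobian of the nonlinear map uniformly over the $E_n$-slices of the Gaussian posterior, with due attention to the coordinates orthogonal to the image of $\bm L_n$.
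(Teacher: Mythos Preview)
Your step (i) is essentially the paper's whole proof: identify the exact Gaussian posterior of $\bm L_n=\sqrt n\sum_j\bm G_{n,j}^T\bm\beta_j-\sqrt n\bm J_{\bm\theta_0}^{-1}\bm\Gamma(\bm f_0)$ from \eqref{posterior}, then compare it with $N(\bm\mu_n,\sigma^2\bm\Sigma_n)$; the paper does this via the Kullback--Leibler formula for two Gaussians rather than by separately controlling $\bm\nu_n-\bm\mu_n$ and $c_n^{-1}-1$, but the content is the same. One small overreach: you assert $\bm\Sigma_n\to\sum_j\bm B_j$. Lemma~4 only shows the eigenvalues of $n\bm G_{n,j}^T(\bm X_n^T\bm X_n)^{-1}\bm G_{n,j}$ are \emph{of the order} of those of $\bm B_j$ (the constants in the spline spectral bounds need not cancel), and only this boundedness is required; the exact limit is not claimed by the paper and is not needed.

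Where you genuinely diverge from the paper is step (ii). The paper simply writes ``By Theorem~1 and \eqref{linearize}, it suffices to show \ldots'', i.e.\ it treats the uniform-on-$E_n$ linearization $\sqrt n(\bm\theta-\bm\theta_0)=\bm L_n+\bm r_n$, $\sup_{E_n}\|\bm r_n\|\to0$, together with $\Pi(E_n^c|\bm Y)=o_{P_0}(1)$, as directly delivering the total-variation reduction to the law of $\bm L_n$. You are right that a bare $o(1)$ bound on $\bm r_n$ does not, on its own, yield a TV statement for arbitrary Borel sets, and you supply a route the paper leaves implicit: use the smoothness of $\bm\psi$ near $\bm f_0$ (via the implicit function theorem, whose $\bm\theta$-Jacobian is $-\bm J_{\bm\theta_0}$) so that, after conditioning on the components of $\bm\beta$ orthogonal to the $p$-dimensional linear functional defining $\bm L_n$, the map $\bm L_n\mapsto\bm L_n+\bm r_n$ is a $C^1$ near-identity on $\mathbb R^p$; the change-of-variables then gives $L^1$-closeness of densities. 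This is a legitimate way to close the gap; the point you flag at the end---that one must first reduce to a square map on $\mathbb R^p$ by splitting off the Gaussian-orthogonal directions before invoking a Jacobian---is exactly the delicate step, and your derivative accounting (a direction in standardized posterior coordinates picks up a factor of order $\sqrt{k_n/n}$, while differentiating the spline costs $\asymp k_n$) is the right heuristic for why $k_n\ll n^{1/8}$ makes the derivative perturbation vanish. So: same core argument as the paper, but with the TV reduction made explicit where the paper takes it for granted.
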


Inspecting the proof, we can conclude that \eqref{thm2} is uniform over $\sigma^2$ belonging to a compact subset of $(0,\infty)$. Also note that the scale of the approximating normal distribution involves the working variance $\sigma^2$ assuming that it is given, even though the convergence is studied under the true distribution $P_0$ with variance $\sigma_0^2$, not necessarily equal to the given $\sigma^2$. Thus, the distribution matches with the frequentist distribution of the estimator in \citet{brunel2008parameter} only if $\sigma$ is correctly specified as $\sigma_0$. The next result assures that putting a prior on $\sigma$ rectifies the problem.\\

\begin{theorem}
We assign independent $N(\bm 0, nk^{-1}_n\sigma^2(\bm X_n^TX_n)^{-1})$ prior on $\bm\beta_j$ for $j=1,\ldots,d$, and inverse gamma prior on $\sigma^2$ with shape and scale parameters $a$ and $b$ respectively. If the fourth order moment of the true error distribution is finite, then
\begin{eqnarray}
\left\|\Pi\left(\sqrt{n}(\bm\theta-\bm\theta_0)\in\cdot|\bm Y\right)-N\left(\bm\mu_n,\sigma^2_0\bm\Sigma_n\right)\right\|_{TV}=o_{P_0}(1).\label{thm3}
\end{eqnarray}
 \end{theorem}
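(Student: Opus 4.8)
The plan is to bootstrap from Theorem 2, which already gives the approximation $N(\bm\mu_n,\sigma^2\bm\Sigma_n)$ for any fixed working variance $\sigma^2$ in a compact subset of $(0,\infty)$, uniformly in $\sigma^2$. The posterior in Theorem 3 is a mixture: conditional on $\sigma^2$, the posterior of $\sqrt{n}(\bm\theta-\bm\theta_0)$ is (up to $o_{P_0}(1)$ in total variation, uniformly) $N(\bm\mu_n(\sigma^2),\sigma^2\bm\Sigma_n)$, and $\sigma^2$ itself has a marginal posterior. First I would write $\Pi(\sqrt{n}(\bm\theta-\bm\theta_0)\in\cdot\mid\bm Y)=\int \Pi(\sqrt{n}(\bm\theta-\bm\theta_0)\in\cdot\mid\bm Y,\sigma^2)\,d\Pi(\sigma^2\mid\bm Y)$ and bound the total variation distance to $N(\bm\mu_n,\sigma_0^2\bm\Sigma_n)$ by the sum of two pieces: (i) $\int \|N(\bm\mu_n(\sigma^2),\sigma^2\bm\Sigma_n)-N(\bm\mu_n(\sigma_0^2),\sigma_0^2\bm\Sigma_n)\|_{TV}\,d\Pi(\sigma^2\mid\bm Y)$, plus the conditional Theorem 2 error integrated against the posterior of $\sigma^2$, and (ii) checking that $\bm\mu_n(\sigma^2)$ does not in fact depend on $\sigma^2$ in its leading order, or that any dependence is negligible. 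Here I note that $\bm\mu_n$ as defined in Theorem 2 uses $(\bm X_n^T\bm X_n)^{-1}\bm X_n^T\bm Y_{,j}$ — the least-squares coefficients — which is $\sigma^2$-free; the $\sigma^2$-dependence of the conditional posterior mean actually enters only through the shrinkage factor $c_n^{-1}=(1+\sigma^2 k_n/n)^{-1}=1+O(k_n/n)$, whose deviation from $1$ contributes $o(n^{-1/2})$ to $\sqrt{n}(\bm\theta-\bm\theta_0)$ since $k_n\ll n^{1/8}$. So the mean is essentially stable in $\sigma^2$.

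The crux is then step (i): I need the marginal posterior of $\sigma^2$ to concentrate on a shrinking (or at least bounded, bounded-away-from-zero) neighborhood of $\sigma_0^2$, so that the scale mismatch between $\sigma^2\bm\Sigma_n$ and $\sigma_0^2\bm\Sigma_n$ vanishes. With the inverse-gamma prior on $\sigma^2$ and the Gaussian working likelihood after integrating out $\bm\beta_j$, the marginal posterior of $\sigma^2$ is again inverse-gamma (or close to it) with shape $\sim a+ nd/2$ and scale $\sim b+\frac12\sum_j\bm Y_{,j}^T(\bm I_n - c_n^{-1}\bm X_n(\bm X_n^T\bm X_n)^{-1}\bm X_n^T)\bm Y_{,j}$; I would show the scale, divided by $nd/2$, converges in $P_0$-probability to $\sigma_0^2$. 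This uses: the residual sum of squares $\bm Y_{,j}^T(\bm I_n-\bm P_n)\bm Y_{,j}$ with $\bm P_n$ the spline projection equals $\sum_i\varepsilon_{i,j}^2 + $ (approximation-bias terms) $ + $ cross terms, and $n^{-1}\sum_i\varepsilon_{i,j}^2\to\sigma_0^2$ by the law of large numbers (second moment finite suffices, but the fourth-moment hypothesis is used to control the fluctuation of $\sigma^2$ around $\sigma_0^2$ at rate $n^{-1/2}$, which is what one needs so that $\|N(\bm\mu_n,\sigma^2\bm\Sigma_n)-N(\bm\mu_n,\sigma_0^2\bm\Sigma_n)\|_{TV}$, being of order $\|(\sigma^2/\sigma_0^2)\bm I - \bm I\|$ in the relevant metric, is $o_{P_0}(1)$). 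The bias term is controlled because $\bm f_0\in C^m([0,1])$ gives spline approximation error $O(k_n^{-m})$, and with $k_n\gg n^{1/(2m)}$ this is $o(n^{-1/2})$, hence negligible in the RSS after multiplying by $n$.

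Concretely the key steps, in order: (1) write the posterior as a $\sigma^2$-mixture and split the TV distance via the triangle inequality into the integrated Theorem 2 error and the integrated scale-and-location discrepancy of the Gaussians; (2) apply Theorem 2 to kill the first piece, using its stated uniformity over compact $\sigma^2$-sets together with the concentration from step (3) to restrict the integral to such a set; (3) identify the marginal posterior of $\sigma^2$ and prove it concentrates at $\sigma_0^2$, with $\sqrt n$-scale fluctuations controlled by the fourth-moment assumption, handling the spline-approximation bias via $\|\bm f_0-\text{spline projection}\|_\infty=O(k_n^{-m})=o(n^{-1/2})$; (4) use a standard bound for the total variation distance between two multivariate normals differing only in scale by a factor $\sigma^2/\sigma_0^2=1+o_{P_0}(1)$ to conclude that piece vanishes; (5) confirm $\bm\mu_n$ is $\sigma^2$-free up to $o(n^{-1/2})$ because the only $\sigma^2$-dependence is through $c_n^{-1}=1+O(k_n/n)$ with $k_n\ll n^{1/8}$. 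I expect step (3) — pinning down the marginal posterior of $\sigma^2$ and its $n^{-1/2}$-fluctuation around $\sigma_0^2$ under only a fourth-moment condition, while simultaneously absorbing the nonparametric bias — to be the main obstacle; everything else is either a direct appeal to Theorem 2 or a routine Gaussian total-variation estimate.
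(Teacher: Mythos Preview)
Your proposal is correct and follows essentially the same route as the paper: write the posterior as a $\sigma^2$-mixture, split the total variation distance by the triangle inequality into the integrated Theorem~2 error and the Gaussian scale discrepancy, control the first via the stated uniformity of Theorem~2 over compact $\sigma$-sets, and control the second by showing the marginal posterior of $\sigma^2$ (which is inverse gamma with the parameters you identify) concentrates at $\sigma_0^2$. Two minor simplifications relative to your outline: first, $\bm\mu_n$ as defined in Theorem~2 is already exactly $\sigma^2$-free (it uses the raw least-squares projection, not the shrunk version), so your step~(5) is not needed; second, the paper only uses posterior \emph{consistency} of $\sigma^2$ at $\sigma_0^2$---a fixed but arbitrarily small neighborhood $\mathscr{N}$ suffices because $\|N(\bm\mu_n,\sigma^2\bm\Sigma_n)-N(\bm\mu_n,\sigma_0^2\bm\Sigma_n)\|_{TV}\lesssim|\sigma-\sigma_0|$---so the $n^{-1/2}$ rate you aim for in step~(3) is more than required, though the fourth-moment hypothesis is indeed what makes the moment calculations for $\E(\sigma^2\mid\bm Y)$ and $\Var(\sigma^2\mid\bm Y)$ go through.
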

\section{Extension}
The results obtained so far can be extended for the case where $\bm\varepsilon_{i,j}$ and $\bm\varepsilon_{i,j'}$ are associated for $i=1,\ldots,n$ and $j\neq j';\,j,j'=1,\ldots,d$. Let under the working model, $\bm\varepsilon_{i,}$ have the dispersion matrix $\bm\Sigma=\sigma^2\bm\Omega$ for $i=1,\ldots,n$, $\bm\Omega$ being a known positive definite matrix. Denoting $\bm\Omega^{-1/2}=(\!(\omega^{jk})\!)_{j,k=1}^d$, we have the following extension of Theorem 2.

\begin{theorem}
Define
\begin{eqnarray}
\bm\mu^*_n&=&\sqrt{n}\sum_{k=1}^d\mathrm{cols}_{(k-1)(k_n+m-1)+1}^{k(k_n+m-1)}\left(\left(\bm G^T_{n,1}\ldots\bm G^T_{n,d}\right)\left(\bm\Omega^{1/2}\otimes\bm I_{k_n+m-1}\right)\right)\nonumber\\
&&\times{(\bm X^T_n\bm X_n)}^{-1}\bm X_n^T\sum_{j=1}^d\bm Y_{,j}\omega^{jk}
-\sqrt{n}\bm J_{\bm\theta_0}^{-1}\bm\Gamma(\bm f_0),\nonumber\\
\bm\Sigma^*_n&=&n\sum_{k=1}^d\mathrm{cols}_{(k-1)(k_n+m-1)+1}^{k(k_n+m-1)}\left(\left(\bm G^T_{n,1}\ldots\bm G^T_{n,d}\right)\left(\bm\Omega^{1/2}\otimes\bm I_{k_n+m-1}\right)\right)\nonumber\\
&&\times{(\bm X^T_n\bm X_n)}^{-1}\nonumber\\
&&\times\mathrm{rows}_{(k-1)(k_n+m-1)+1}^{k(k_n+m-1)}\left(\left(\bm\Omega^{1/2}\otimes\bm I_{k_n+m-1}\right)\left(\bm G^T_{n,1}\ldots\bm G^T_{n,d}\right)^T\right).\nonumber
\end{eqnarray}
Then under the conditions of Theorem 1 and Theorem 2,
\begin{eqnarray}
\left\|\Pi\left(\sqrt{n}(\bm\theta-\bm\theta_0)\in\cdot|\bm Y\right)-N\left(\bm\mu^*_n,\sigma^2\bm\Sigma^*_n\right)\right\|_{TV}=o_{P_0}(1).\label{thm4_1}
\end{eqnarray}
If $\sigma^2$ is unknown and is given an inverse gamma prior, then
\begin{eqnarray}
\left\|\Pi\left(\sqrt{n}(\bm\theta-\bm\theta_0)\in\cdot|\bm Y\right)-N\left(\bm\mu^*_n,\sigma^2_0\bm\Sigma^*_n\right)\right\|_{TV}=o_{P_0}(1),\label{thm4_2}
\end{eqnarray}
where $\sigma^2_0$ is the true value of $\sigma^2$.
\end{theorem}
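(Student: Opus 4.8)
Write $\mathrm{vec}(\cdot)$ for column-stacking. The guiding remark is that the weight function $w$ --- hence the map $\bm\psi$, the matrix $\bm J_{\bm\theta_0}$ and the functional $\bm\Gamma$ --- is fixed and unrelated to the error covariance, so that the correlation structure $\bm\Sigma=\sigma^2\bm\Omega$ enters only through the joint posterior of the coefficient matrix $\bm B_n=(\bm\beta_1,\ldots,\bm\beta_d)$. Moreover, since all $d$ response coordinates are regressed on the \emph{same} design $\bm X_n$, the generalized least squares fit of $\bm B_n$ coincides column by column with the ordinary least squares fit $(\bm X_n^T\bm X_n)^{-1}\bm X_n^T\bm Y_{,j}$. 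A conjugate computation (prior precision $k_n n^{-1}(\bm I_d\otimes\bm X_n^T\bm X_n)$, working-likelihood precision $\sigma^{-2}(\bm\Omega^{-1}\otimes\bm X_n^T\bm X_n)$) then gives that $\mathrm{vec}(\bm B_n)\mid\bm Y$ is Gaussian with mean $\mathrm{vec}\big((\bm X_n^T\bm X_n)^{-1}\bm X_n^T\bm Y\big)$ and dispersion $\sigma^2\,\bm\Omega\otimes(\bm X_n^T\bm X_n)^{-1}$, each up to a matrix perturbation of order $k_n/n$. As in the proof of Theorem 2, all such prior- and $k_n/n$-induced corrections are $o_{P_0}(1)$ after the $\sqrt n$ scaling because $k_n\ll n^{1/8}\ll n^{1/2}$; in particular the $\sigma^2$ appearing in the dispersion is the \emph{working} variance, matching the statement.

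Next I would invoke, verbatim, the linearization of Theorem 1 in the $d$-dimensional form of Remark 4: there is $E_n\subseteq C^m((0,1))\times\bm\Theta$ with $\Pi(E_n^c\mid\bm Y)=o_{P_0}(1)$ on which, uniformly, $\sqrt n(\bm\theta-\bm\theta_0)=\bm J_{\bm\theta_0}^{-1}\sqrt n(\bm\Gamma(\bm f)-\bm\Gamma(\bm f_0))+o(1)=\sqrt n\sum_{j=1}^d\bm G_{n,j}^T\bm\beta_j-\sqrt n\bm J_{\bm\theta_0}^{-1}\bm\Gamma(\bm f_0)+o(1)$, cf.\ \eqref{linearize}. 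One must first check that $\Pi(E_n^c\mid\bm Y)=o_{P_0}(1)$ persists under correlated errors; since $\bm\Omega$ is fixed and positive definite its eigenvalues are bounded away from $0$ and $\infty$, so the posterior of $\bm f$ still contracts at the spline rate and the well-separatedness assumption \eqref{assmp} still confines $\|\bm\theta-\bm\theta_0\|$, the arguments of Section 6 and Section 7 going through with constants inflated only by $\mathrm{maxeig}(\bm\Omega^{-1})$. It then suffices to identify the posterior law of the affine functional $\sqrt n\sum_j\bm G_{n,j}^T\bm\beta_j-\sqrt n\bm J_{\bm\theta_0}^{-1}\bm\Gamma(\bm f_0)$, which, being affine in the Gaussian vector $\mathrm{vec}(\bm B_n)$, is Gaussian with mean $\sqrt n\sum_j\bm G_{n,j}^T(\bm X_n^T\bm X_n)^{-1}\bm X_n^T\bm Y_{,j}-\sqrt n\bm J_{\bm\theta_0}^{-1}\bm\Gamma(\bm f_0)$ and dispersion $\sigma^2\,n\sum_{l,l'=1}^d\Omega_{l,l'}\,\bm G_{n,l}^T(\bm X_n^T\bm X_n)^{-1}\bm G_{n,l'}$, up to $o_{P_0}(1)$. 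A short Kronecker-product manipulation, writing $\sum_k(\bm\Omega^{1/2})_{lk}\,\omega^{jk}=(\bm\Omega^{1/2}\bm\Omega^{-1/2})_{lj}=\delta_{lj}$ and regrouping via the $\mathrm{cols}$ and $\mathrm{rows}$ operators, shows these coincide with $\bm\mu^*_n$ and $\sigma^2\bm\Sigma^*_n$ of the statement.

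To finish \eqref{thm4_1} I would run the standard ``small perturbation of a non-degenerate Gaussian'' argument: $\bm\Sigma^*_n$ has eigenvalues bounded away from $0$ and $\infty$ --- this uses $\mathrm{mineig}(\bm\Omega)>0$ together with the non-singularity of each $\bm B_j$, exactly as for $\bm\Sigma_n$ in Theorem 2 --- the exact posterior of the affine functional is within $o_{P_0}(1)$ in total variation of $N(\bm\mu^*_n,\sigma^2\bm\Sigma^*_n)$, and $\sqrt n(\bm\theta-\bm\theta_0)$ equals that functional up to $o(1)$ on a set of posterior probability tending to one; anti-concentration of the limiting non-degenerate Gaussian near the boundary of an arbitrary Borel set absorbs the $o(1)$ displacement in total variation. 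For the unknown-$\sigma^2$ claim \eqref{thm4_2} I would copy the proof of Theorem 3: \eqref{thm4_1} holds uniformly over $\sigma^2$ in compact subsets of $(0,\infty)$, the inverse gamma posterior of $\sigma^2$ concentrates at the true value $\sigma^2_0$ (this is where finiteness of the fourth moment of $P_0$ is used, to control the empirical residual second moment under the misspecified working error model), and $\sigma^2\mapsto N(\bm\mu^*_n,\sigma^2\bm\Sigma^*_n)$ is total-variation continuous on the scale of that concentration, so mixing over $\Pi(\sigma^2\mid\bm Y)$ replaces $\sigma^2$ by $\sigma^2_0$ with $o_{P_0}(1)$ error.

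\emph{Main obstacle.} Conceptually little is new; the work is bookkeeping. The two places needing care are (i) confirming that $\Pi(E_n^c\mid\bm Y)=o_{P_0}(1)$ survives the correlation --- routine, via the eigenvalue bounds on $\bm\Omega$ --- and (ii) the Kronecker-product algebra identifying the recomputed posterior mean and dispersion of $\sqrt n\sum_j\bm G_{n,j}^T\bm\beta_j$ with the rather opaque expressions $\bm\mu^*_n$ and $\sigma^2\bm\Sigma^*_n$, while certifying that every prior- and $k_n/n$-induced correction vanishes after the $\sqrt n$ scaling. The passage to \eqref{thm4_2} borrows its single substantive ingredient, posterior consistency of $\sigma^2$ under a misspecified four-moment error law, directly from the proof of Theorem 3.
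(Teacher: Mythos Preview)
Your proposal is correct and follows essentially the same strategy as the paper: compute the conjugate Gaussian posterior of $\mathrm{vec}(\bm B_n)$ under the working model with dispersion $\bm\Sigma=\sigma^2\bm\Omega$, observe that Lemmas 1--4 and hence the linearization of Theorem~1 persist because the eigenvalues of $\bm\Omega$ are bounded away from $0$ and $\infty$, drop the $k_n/n$ prior corrections, and then identify the resulting Gaussian law of the affine functional with $N(\bm\mu_n^*,\sigma^2\bm\Sigma_n^*)$; the unknown-$\sigma^2$ case is handled exactly as in Theorem~3.

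The one organizational difference is that the paper introduces a whitening transformation: it defines $\bm Z_k$ as the $k$th block of $\big((\bm\Sigma^{-1}+k_n\bm I_d/n)^{1/2}\otimes\bm I_{k_n+m-1}\big)\mathrm{vec}(\bm B_n)$, so that the $\bm Z_k$ are mutually independent in the posterior, then approximates $\bm C_n=(\bm\Sigma+k_n\bm\Sigma^2/n)^{-1/2}$ by $\bm\Sigma^{-1/2}$ blockwise, and finally re-expresses $(\bm G_{n,1}^T\ldots\bm G_{n,d}^T)\mathrm{vec}(\bm B_n)$ in terms of the $\bm Z_k$ to obtain an intermediate approximation $N(\bm\mu_n^{**},\bm\Sigma_n^{**})$ whose Kullback--Leibler divergence from $N(\bm\mu_n^*,\sigma^2\bm\Sigma_n^*)$ is shown to vanish. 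Your route bypasses this detour by working directly with the correlated Gaussian posterior of $\mathrm{vec}(\bm B_n)$ and doing the Kronecker algebra $\sum_k(\bm\Omega^{1/2})_{lk}(\bm\Omega^{1/2})_{l'k}=\Omega_{ll'}$ and $\sum_k(\bm\Omega^{1/2})_{lk}\omega^{jk}=\delta_{lj}$ at the end; this is cleaner and loses nothing, since the paper's decorrelation step is a device to recycle the independence structure of Theorem~2 rather than a substantive ingredient.
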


\begin{remark}[Remark 5:]
In many applications, the regression function is modeled as $\bm h_{\bm\theta}(t)=\bm g(\bm f_{\bm\theta}(t))$ instead of $\bm f_{\bm\theta}(t)$, where $\bm g$ is a known invertible function and $\bm h_{\bm\theta}(t)\in\mathbb{R}^d$. It should be noted that
\begin{eqnarray}
\frac{d\bm h_{\bm\theta}(t)}{dt}=\bm g'(\bm f_{\bm\theta}(t))\frac{d\bm f_{\bm\theta}(t)}{dt}&=&\bm g'(\bm g^{-1}\bm h_{\bm\theta}(t))\bm F(t,\bm g^{-1}\bm h_{\bm\theta}(t),\bm\theta)\nonumber\\
&=&\bm H(t,\bm h_{\bm\theta}(t),\bm\theta),\nonumber
\end{eqnarray}
which is known function of $t, \bm h_{\bm\theta}$ and $\bm\theta$. Now we can carry out our analysis replacing $\bm F$ and $\bm f_{\bm\theta}$ in \eqref{intro} by $\bm H$ and $\bm h_{\bm\theta}$ respectively.
\end{remark}
\section{Simulation Study}
We consider the Lotka-Volterra equations to study the posterior distribution of $\bm\theta$. We consider both the situations where the true regression function belongs to the solution set and lies outside the solution set. For a sample of size $n$, the $x_i$'s are chosen as $x_i=(2i-1)/2n$ for $i=1,\ldots,n$. Samples of sizes 50, 100 and 500 are considered. The weight function is chosen as $w(t)=t(1-t),\,t\in[0,1]$. We simulate 1000 replications for each case. Under each replication a sample of size 1000 is directly drawn from the posterior distribution of $\bm\theta$ and then 95\% equal tailed credible interval is obtained. Each replication took around one minute. We calculate the coverage and the average length of the corresponding credible interval over these 1000 replications. The estimated standard errors of the interval length and coverage are given inside the parentheses in the tables. We also consider 1000 replications to construct the $95\%$ equal tailed confidence interval based on asymptotic normality as obtained from the estimation method introduced by \citet{varah1982spline} and modified and studied by \citet{brunel2008parameter}. We abbreviate this method by ``VB" in tables. The estimated standard errors of the interval length and coverage are given inside the parentheses in the tables.
Thus we have $p=4,d=2$ and the ODE's are given by
\begin{eqnarray}
F_1(t, \bm f_{\bm\theta}(t), \bm\theta)&=&\theta_1f_{1\bm\theta}(t)-\theta_2f_{1\bm\theta}(t)f_{2\bm\theta}(t),\nonumber\\
F_2(t, \bm f_{\bm\theta}(t), \bm\theta)&=&-\theta_3f_{2\bm\theta}(t)+\theta_4f_{1\bm\theta}(t)f_{2\bm\theta}(t),\,\,t\in[0,1],\nonumber
\end{eqnarray}
with initial condition $f_{1\bm\theta}(0)=1,f_{2\bm\theta}(0)=0.5$.
The above system is not analytically solvable.
We consider two cases.\\
\indent\textbf{\it{Case 1}:}
The true regression function is in the model. Thus the true mean vector is given by $\left(f_{1\bm\theta_0}(t), f_{2\bm\theta_0}(t)\right)^T$, where $\bm\theta_0=(\theta_{10},\theta_{20},\theta_{30},\theta_{40})^T$. We take $\theta_{10}=\theta_{20}=\theta_{30}=\theta_{40}=10$ to be the true value of the parameter.\\
\indent\textbf{\it{Case 2}:}
The true mean vector is taken outside the solution set of the ODE and is given by $\left(f_{1\bm\theta_0}(t)+0.4\sin(4\pi t), f_{2\bm\theta_0}(t)+0.4\sin(4\pi t)\right)^T$.\\

%\begin{eqnarray}
%f_{10}(t)&=&f_{1\bm\theta}(t)+0.4\sin(4\pi t),\nonumber\\
%f_{20}(t)&=&f_{2\bm\theta}(t)+0.4\sin(4\pi t).\nonumber
%\end{eqnarray}
\indent The true distribution of error is taken either $N(0,(0.2)^2)$ or a scaled $t$-distribution with $6$ degrees of freedom, where scaling is done in order to make the standard deviation $0.2$. We put an inverse gamma prior on $\sigma^2$ with shape and scale parameters being $99$ and $1$ respectively and independent Gaussian priors on $\bm\beta_1$ and $\bm\beta_2$ with mean vector $\bm 0$ and dispersion matrix $nk^{-1}_n\sigma^2(\bm X^T_n\bm X_n)^{-1}$. As far as choosing $k_n$ is concerned, we take $k_n$ in the order of $n^{1/9}$. The simulation results are summarized in the tables 1 and 2. Not surprisingly asymptotic normality based confidence intervals obtained from VB method are shorter but too optimistic, failing to give adequate coverage for finite sample sizes. On the other hand, the posterior credible intervals appear to be slightly conservative.

\begin{table}[h]
\centering
\caption{\textit{Coverages and average lengths of the Bayesian credible interval and confidence interval obtained from VB method for Case 1}}
\begin{tabular}{|c|c|cc|cc|cc|cc|}
\hline
\multirow{2}{*}{$n$}&&\multicolumn{4}{|c|}{$N(0,(0.2)^2)$}&\multicolumn{4}{|c|}{scaled $t_6$}\\
\cline{3-10}
&&\multicolumn{2}{|c|}{Bayes}&\multicolumn{2}{|c|}{VB}&\multicolumn{2}{|c|}{Bayes}&\multicolumn{2}{|c|}{VB}\\
\hline
&&coverage&length&coverage&length&coverage&length&coverage&length\\
&&(se)&(se)&(se)&(se)&(se)&(se)&(se)&(se)\\
50&$\theta_1$&97.7&9.93&83.6&4.57&97.8&9.91&83.6&4.55\\
&&(0.02)&(1.04)&(0.05)&(0.84)&(0.02)&(1.02)&(0.05)&(0.93)\\
&$\theta_2$&100.0&12.55&82.4&4.26&100.0&12.48&81.1&4.24\\
&&(0.00)&(1.27)&(0.05)&(0.80)&(0.00)&(1.24)&(0.06)&(0.87)\\
&$\theta_3$&99.6&10.68&86.6&4.96&99.0&10.65&85.1&4.93\\
&&(0.01)&(1.21)&(0.05)&(0.98)&(0.01)&(1.23)&(0.05)&(1.06)\\
&$\theta_4$&100.0&13.00&85.5&4.39&100.0&12.94&84.3&4.36\\
&&(0.00)&(1.40)&(0.05)&(0.88)&(0.00)&(1.42)&(0.05)&(0.94)\\
\cline{2-10}
100&$\theta_1$&98.9&6.67&88.6&3.38&98.6&6.63&87.6&3.34\\
&&(0.01)&(0.58)&(0.03)&(0.46)&(0.01)&(0.57)&(0.03)&(0.47)\\
&$\theta_2$&100.0&7.41&89.0&3.15&100.0&7.37&88.1&3.13\\
&&(0.00)&(0.63)&(0.03)&(0.44)&(0.00)&(0.62)&(0.03)&(0.45)\\
&$\theta_3$&99.3&7.08&88.9&3.60&99.4&7.12&89.7&3.61\\
&&(0.01)&(0.62)&(0.03)&(0.51)&(0.01)&(0.65)&(0.03)&(0.56)\\
&$\theta_4$&100.0&7.54&87.4&3.19&100.0&7.59&89.4&3.21\\
&&(0.00)&(0.62)&(0.03)&(0.46)&(0.00)&(0.67)&(0.03)&(0.50)\\
\cline{2-10}
500&$\theta_1$&98.3&2.24&94.6&1.55&98.5&2.25&93.8&1.55\\
&&(0.00)&(0.11)&(0.01)&(0.09)&(0.00)&(0.12)&(0.01)&(0.10)\\
&$\theta_2$&99.6&2.20&93.8&1.45&99.5&2.21&93.2&1.45\\
&&(0.00)&(0.10)&(0.01)&(0.09)&(0.00)&(0.12)&(0.01)&(0.10)\\
&$\theta_3$&99.2&2.41&93.8&1.66&98.6&2.40&93.8&1.66\\
&&(0.00)&(0.13)&(0.01)&(0.10)&(0.00)&(0.13)&(0.01)&(0.12)\\
&$\theta_4$&99.6&2.24&93.5&1.48&99.6&2.24&94.3&1.48\\
&&(0.00)&(0.11)&(0.01)&(0.09)&(0.00)&(0.12)&(0.01)&(0.10)\\
\hline
\end{tabular}
\end{table}

\begin{table}[h]
\centering
\caption{\textit{Coverages and average lengths of the Bayesian credible interval and confidence interval obtained from VB method for Case 2}}
\begin{tabular}{|c|c|cc|cc|cc|cc|}
\hline
\multirow{2}{*}{$n$}&&\multicolumn{4}{|c|}{$N(0,(0.2)^2)$}&\multicolumn{4}{|c|}{scaled $t_6$}\\
\cline{3-10}
&&\multicolumn{2}{|c|}{Bayes}&\multicolumn{2}{|c|}{VB}&\multicolumn{2}{|c|}{Bayes}&\multicolumn{2}{|c|}{VB}\\
\hline
&&coverage&length&coverage&length&coverage&length&coverage&length\\
&&(se)&(se)&(se)&(se)&(se)&(se)&(se)&(se)\\
50&$\theta_1$&98.2&13.62&84.9&7.28&98.3&13.57&85.1&7.24\\
&&(0.02)&(1.94)&(0.05)&(1.84)&(0.02)&(1.90)&(0.05)&(1.90)\\
&$\theta_2$&100.0&17.03&84.6&6.82&100.0&16.94&84.0&6.77\\
&&(0.00)&(2.40)&(0.05)&(1.76)&(0.00)&(2.33)&(0.05)&(1.81)\\
&$\theta_3$&100.0&7.60&87.3&2.79&100.0&7.59&87.8&2.78\\
&&(0.00)&(0.80)&(0.05)&(0.60)&(0.00)&(0.81)&(0.05)&(0.67)\\
&$\theta_4$&100.0&8.91&83.7&2.28&100.0&8.89&84.0&2.27\\
&&(0.00)&(0.83)&(0.05)&(0.50)&(0.00)&(0.85)&(0.05)&(0.55)\\
\cline{2-10}
100&$\theta_1$&99.0&9.77&89.8&5.27&99.2&9.75&90.9&5.24\\
&&(0.01)&(1.24)&(0.03)&(0.95)&(0.01)&(1.20)&(0.03)&(0.95)\\
&$\theta_2$&100.0&10.82&89.0&4.94&100.0&10.82&90.7&4.92\\
&&(0.00)&(1.36)&(0.03)&(0.91)&(0.00)&(1.33)&(0.03)&(0.92)\\
&$\theta_3$&99.9&4.76&85.2&1.90&100.0&4.76&86.5&1.90\\
&&(0.00)&(0.39)&(0.04)&(0.30)&(0.00)&(0.41)&(0.03)&(0.34)\\
&$\theta_4$&100.0&4.96&81.5&1.54&100.0&4.96&84.5&1.54\\
&&(0.00)&(0.36)&(0.04)&(0.25)&(0.00)&(0.38)&(0.04)&(0.28)\\
\cline{2-10}
500&$\theta_1$&98.6&3.53&92.6&2.36&98.7&3.56&92.8&2.37\\
&&(0.00)&(0.26)&(0.01)&(0.19)&(0.00)&(0.27)&(0.01)&(0.21)\\
&$\theta_2$&99.6&3.48&92.2&2.22&99.7&3.51&93.3&2.23\\
&&(0.00)&(0.26)&(0.01)&(0.18)&(0.00)&(0.26)&(0.01)&(0.20)\\
&$\theta_3$&99.5&1.53&89.2&0.83&99.2&1.53&86.3&0.83\\
&&(0.00)&(0.07)&(0.01)&(0.06)&(0.00)&(0.07)&(0.02)&(0.06)\\
&$\theta_4$&99.2&1.41&85.5&0.67&99.3&1.41&84.3&0.67\\
&&(0.00)&(0.06)&(0.02)&(0.05)&(0.00)&(0.06)&(0.02)&(0.05)\\
\hline
\end{tabular}
\end{table}

\section{Proofs}
\begin{proof}[Proof of Theorem 1]
The structure of the proof follows that of Proposition 3.1 of \citet{brunel2008parameter} and Proposition 3.3 of \citet{gugushvili2012n}, but differs substantially in detail since we address posterior variation and also allow misspecification. First note that
\begin{eqnarray}
\lefteqn{\bm\Gamma(f)-\bm\Gamma(f_0)=\int_0^1 \left(-(D_{0,0,1} F(t, f_0(t),\bm\theta_0))^TD_{0,1,0} F(t, f_0(t),\bm\theta_0)w(t)\right.}\label{lin}\\
&&\left.-\frac{d}{dt}[(D_{0,0,1} F(t, f_0(t),\bm\theta_0))^Tw(t)]+\left(D_{0,1,0}\bm S(t, f_0(t),\bm\theta_0)\right)w(t)\right)(f(t)-f_0(t))dt.\nonumber
\end{eqnarray}
Interchanging the orders of differentiation and integration and using the definitions of $\bm\theta$ and $\bm\theta_0$,
\begin{eqnarray}
\int_0^1\left(D_{0,0,1}F(t, f(t),\bm\theta)\right)^T(f'(t)-F(t, f(t),\bm\theta))w(t)dt&=&\bm0,\label{theta_1}\\
\int_0^1\left(D_{0,0,1}F(t, f_0(t),\bm\theta_0)\right)^T(f'_0(t)-F(t, f_0(t),\bm\theta_0))w(t)dt&=&\bm0.\label{theta0}
\end{eqnarray}
Taking difference, we get
\begin{eqnarray}
\lefteqn{\int_0^1\left((D_{0,0,1}F(t, f(t),\bm\theta)-D_{0,0,1}F(t, f(t),\bm\theta_0))^T(f'_0(t)-F(t, f_0(t),\bm\theta_0))\right)w(t)dt}\nonumber\\
&&+\int_0^1(D_{0,0,1}F(t, f(t),\bm\theta_0)-D_{0,0,1}F(t, f_0(t),\bm\theta_0))^T(f'_0(t)-F(t, f_0(t),\bm\theta_0))w(t)dt\nonumber\\
&&+\int_0^1\left(D_{0,0,1}F(t, f(t),\bm\theta_0)\right)^T(f'(t)-f'_0(t)+F(t, f_0(t),\bm\theta_0)-F(t, f(t),\bm\theta_0))w(t)dt\nonumber\\
&&+\int_0^1\left(D_{0,0,1}F(t, f(t),\bm\theta)-D_{0,0,1}F(t, f(t),\bm\theta_0)\right)^T(f'(t)-f'_0(t)\nonumber\\
&&+F(t, f_0(t),\bm\theta_0)-F(t, f(t),\bm\theta_0))w(t)dt\nonumber\\
&&+\int_0^1\left(D_{0,0,1}F(t, f(t),\bm\theta)\right)^T\left(F(t, f(t),\bm\theta_0)-F(t, f(t),\bm\theta)\right)w(t)dt=\bm0.\nonumber
\end{eqnarray}
Replacing the difference between the values of a function at two different values of an argument by the integral of the corresponding partial derivative, we get
\begin{eqnarray}
\lefteqn{\bm M(f,\bm\theta)(\bm\theta-\bm\theta_0)}\nonumber\\
&=&\int_0^1\left(D_{0,0,1}F(t, f(t),\bm\theta_0)-D_{0,0,1}F(t, f_0(t),\bm\theta_0)\right)^T(f'_0(t)-F(t, f_0(t),\bm\theta_0))w(t)dt\nonumber\\
&&+\int_0^1\left(D_{0,0,1}F(t, f(t),\bm\theta_0)\right)^T(f'(t)-f'_0(t)+F(t, f_0(t),\bm\theta_0)-F(t, f(t),\bm\theta_0))w(t)dt,\nonumber
\end{eqnarray}
where $\bm M(f,\bm\theta)$ is given by
\begin{eqnarray}
\lefteqn{\int_0^1\left(D_{0,0,1}F(t, f(t),\bm\theta)\right)^T\left\{\int_0^1D_{0,0,1}F(t, f(t),\bm\theta_0+\lambda(\bm\theta-\bm\theta_0))d\lambda\right\} w(t)dt}\nonumber\\
&&-\int_0^1\left\{\int_0^1\left(D_{0,0,1}\bm S(t, f(t),\bm\theta_0+\lambda(\bm\theta-\bm\theta_0))\right)d\lambda\right\}w(t)dt\nonumber\\
&&-\int_0^1\left\{\int_0^1\left(D_{0,0,2}\bm F(t, f(t),\bm\theta_0+\lambda(\bm\theta-\bm\theta_0))\right)d\lambda\right\}(f'(t)-f'_0(t)\nonumber\\
&&+F(t, f_0(t),\bm\theta_0)-F(t, f(t),\bm\theta_0))w(t)dt\nonumber.
\end{eqnarray}
Note that $\bm M(f_0,\bm\theta_0)=\bm J_{\bm\theta_0}$. We also define
\begin{eqnarray}
E_n=\{(f,\bm\theta):\sup_{t\in[0,1]}|f(t)-f_{0}(t)|\leq\epsilon_n, \|\bm\theta-\bm\theta_0\|\leq\epsilon_n\},\nonumber
\end{eqnarray}
where $\epsilon_n\rightarrow0$. By Lemmas 2 and 3, there exists such a sequence $\{\epsilon_n\}$ so that $\Pi(E^c_n|\bm Y)=o_{P_0}(1)$. Then, $\bm M(f,\bm\theta)$ is invertible and the eigenvalues of $[\bm M(f,\bm\theta)]^{-1}$ are bounded away from $0$ and $\infty$ for sufficiently large $n$ and $\|(\bm M(f,\bm\theta))^{-1}-\bm J^{-1}_{\bm\theta_0}\|=o(1)$ for $(f, \bm\theta)\in E_n$. Hence, on $E_n$
\begin{eqnarray}
\sqrt{n}(\bm\theta-\bm\theta_0)=\left(\bm J^{-1}_{\bm\theta_0}+o(1)\right)\sqrt{n}(\bm T_{1n}+\bm T_{2n}+\bm T_{3n}),\nonumber
\end{eqnarray}
for sufficiently large $n$, where
\begin{eqnarray}
\bm T_{1n}&=&\int_0^1\left(D_{0,0,1}F(t, f(t),\bm\theta_0)-D_{0,0,1}F(t, f_0(t),\bm\theta_0)\right)^T(f'_0(t)-F(t, f_0(t),\bm\theta_0))w(t)dt,\nonumber\\
\bm T_{2n}&=&\int_0^1\left(D_{0,0,1}F(t, f(t),\bm\theta_0)\right)^T(f'(t)-f'_0(t))w(t)dt,\nonumber\\
\bm T_{3n}&=&\int_0^1\left(D_{0,0,1}F(t, f(t),\bm\theta_0)\right)^T(F(t, f_0(t),\bm\theta_0)-F(t, f(t),\bm\theta_0))w(t)dt.\nonumber
\end{eqnarray}
In view of Lemmas 2 and 4, on a set in the sample space with high true probability, the posterior distribution of $J^{-1}_{\bm\theta_0}\sqrt{n}(\bm T_{1n}+\bm T_{2n}+\bm T_{3n})$ assigns most of its mass inside a large compact set. Thus, we can assert that inside the set $E_n$, the asymptotic behavior of the posterior distribution of $\sqrt{n}(\bm\theta-\bm\theta_0)$ is given by that of
\begin{eqnarray}
{{\bm J^{-1}_{\bm\theta_0}}
\sqrt{n}(\bm T_{1n}+\bm T_{2n}+\bm T_{3n})}.\label{asexpr}
\end{eqnarray}
We shall extract $\sqrt{n}\bm J^{-1}_{\bm\theta_0}\left(\bm\Gamma(f)-\bm\Gamma(f_0)\right)$ from \eqref{asexpr} and show that the remainder term goes to zero. First write
\begin{eqnarray}
\bm T_{2n}&=&-\int_0^1\left(\frac{d}{dt}[(D_{0,0,1}F(t, f_0(t),\bm\theta_0))^Tw(t)]\right)(f(t)-f_0(t))dt\nonumber\\
&&\int_0^1\left(D_{0,0,1}F(t, f(t),\bm\theta_0)-D_{0,0,1}F(t, f_0(t),\bm\theta_0\right)^T(f'(t)-f'_0(t))w(t)dt,\nonumber
\end{eqnarray}
which follows by integration by parts and the fact that $w(0)=w(1)=0$. Note that the first integral of the above equation appears in \eqref{lin}. The norm of the second integral can be bounded above by a constant multiple of $\sup_{t\in[0,1]}|f(t)-f_{0}(t)|^2+\sup_{t\in[0,1]}|f'(t)-f'_{0}(t)|^2$ using the continuity of $D_{0,1,1}F(t,y,\bm\theta)$.
Now we consider $\bm T_{3n}$ in \eqref{asexpr}. Then,
\begin{eqnarray}
\bm T_{3n}&=&\int_0^1\left(D_{0,0,1}F(t, f_0(t),\bm\theta_0)\right)^T(F(t, f_0(t),\bm\theta_0)-F(t, f(t),\bm\theta_0))w(t)dt\nonumber\\
&&+\int_0^1\left(D_{0,0,1}F(t, f(t),\bm\theta_0)-D_{0,0,1}F(t, f_0(t),\bm\theta_0)\right)^T\nonumber\\
&&\times(F(t, f_0(t),\bm\theta_0)-F(t, f(t),\bm\theta_0))w(t)dt.\label{exp3_1}
\end{eqnarray}
The first integral on the right hand side of \eqref{exp3_1} can be written as
\begin{eqnarray}
&-&\int_0^1\left(D_{0,0,1}F(t, f_0(t),\bm\theta_0)\right)^TD_{0,1,0}F(t, f_0(t),\bm\theta_0)(f(t)-f_0(t))w(t)dt\nonumber\\
&&-\int_0^1\left(D_{0,0,1}F(t, f_0(t),\bm\theta_0)\right)^T\nonumber\\
&&\times\left\{\int_0^1[D_{0,1,0}F(t, f_0(t)+\lambda(f-f_0)(t),\bm\theta_0)-D_{0,1,0}F(t, f_0(t),\bm\theta_0)]d\lambda\right\}\nonumber\\
&&\times(f(t)-f_0(t))w(t)dt\nonumber\\
&=&\bm T_{31n}+\bm T_{32n},\nonumber
\end{eqnarray}
say. Now $\bm T_{31n}$ appears in \eqref{lin}. By the continuity of $D_{0,2,0}F(t, y, \bm\theta)$, $\|\bm T_{32n}\|$ can be bounded above up to a constant by a multiple of $\sup_{t\in[0,1]}|f(t)-f_{0}(t)|^2$.
We apply the Cauchy-Schwarz inequality and the continuity of $D_{0,1,1}F(t,y,\bm\theta)$ to bound the second integral on the right hand side of \eqref{exp3_1} by a constant multiple of $\sup\{|f(t)-f_{0}(t)|^2: t\in[0,1]\}$. As far as the first term inside the bracket of \eqref{asexpr} is concerned, we have
\begin{eqnarray}
\bm T_{1n}&=&\int_0^1\left(D_{0,1,0}\bm S(t, f_0(t),\bm\theta_0)\right)(f(t)-f_0(t))w(t)dt\nonumber\\
&&+\int_0^1\left\{\int_0^1\left(D_{0,1,0}\bm S(t, f_0(t)+\lambda(f-f_0)(t),\bm\theta_0)-D_{0,1,0}\bm S(t, f_0(t),\bm\theta_0)\right)d\lambda\right\}\nonumber\\
&&\times(f(t)-f_0(t))w(t)dt\nonumber.
\end{eqnarray}
The first integral appears in \eqref{lin}. The norm of the second integral of the above display can be bounded by a constant multiple of $\sup\{|f(t)-f_{0}(t)|^2: t\in[0,1]\}$ utilizing the continuity of $D_{0,2,1}F(t,y,\bm\theta)$ with respect to its arguments.
Combining these, we find that the norm of $\bm J^{-1}_{\bm\theta_0}\sqrt{n}(\bm T_{1n}+\bm T_{2n}+\bm T_{3n})-\bm J^{-1}_{\bm\theta_0}\sqrt{n}\left(\bm\Gamma(f)-\bm\Gamma(f_0)\right)$ is bounded above by a constant multiple of $\sqrt{n}\sup_{t\in[0,1]}|f(t)-f_{0}(t)|^2+\sqrt{n}\sup_{t\in[0,1]}|f'(t)-f'_{0}(t)|^2$. Now applying Lemma 2, we get the desired result.
\end{proof}

\begin{proof}[Proof of Theorem 2]
By Theorem 1 and \eqref{linearize}, it suffices to show that
\begin{eqnarray}
\left\|\Pi\left(\sqrt{n}\sum_{j=1}^d\bm G_{n,j}^T\bm\beta_j-\sqrt{n}\bm J^{-1}_{\bm\theta_0}\bm\Gamma(\bm f_0)\in\cdot|\bm Y\right)-{N}(\bm\mu_n,\sigma^2\bm\Sigma_n)\right\|_{TV}
=o_{P_0}(1).\nonumber\\\label{thm2_1}
\end{eqnarray}
Note that the posterior distribution of $\bm G^T_{n,j}\bm\beta_j$ is a normal distribution with mean vector $(1+\sigma^2k_n/n)^{-1}\bm G^T_{n,j}(\bm X^T_nX_n)^{-1}\bm X^T_n\bm Y_{,j}$ and dispersion matrix $\sigma^2(1+\sigma^2k_n/n)^{-1}\bm G^T_{n,j}(\bm X^T_nX_n)^{-1}\bm G_{n,j}$. We calculate the Kullback-Leibler divergence between two Gaussian distributions to prove the assertion. Alternatively, we can also follow the approach given in Theorem 1 and Corollary 1 of \citet{bontemps2011bernstein}. The Kulback-Leibler divergence between the distributions $N_{k_n+m-1}(\bm\mu_1,\bm\Omega_1)$ and $N_{k_n+m-1}(\bm\mu_2,\bm\Omega_2)$ is given by
\begin{eqnarray}
\frac{1}{2}\left(\mathrm{tr}(\bm\Omega^{-1}_1\bm\Omega_2)+(\bm\mu_1-\bm\mu_2)^T\bm\Omega^{-1}_1(\bm\mu_1-\bm\mu_2)-(k_n+m-1)-\log(\mathrm{det}(\bm\Omega^{-1}_1\bm\Omega_2))\right).\nonumber
\end{eqnarray}
Taking $\bm\mu_1=(1+\sigma^2k_n/n)^{-1}\bm G^T_{n,j}(\bm X^T_nX_n)^{-1}\bm X^T_n\bm Y_{,j}$, $\bm\Omega_1=\sigma^2(1+\sigma^2k_n/n)^{-1}\bm G^T_{n,j}(\bm X^T_nX_n)^{-1}\bm G_{n,j}$ and $\bm\mu_2=\bm G^T_{n,j}(\bm X^T_nX_n)^{-1}\bm X^T_n\bm Y_{,j}$, $\bm\Omega_2=\sigma^2\bm G^T_{n,j}(\bm X^T_nX_n)^{-1}\bm G_{n,j}$, we get $\mathrm{tr}(\bm\Omega^{-1}_1\bm\Omega_2)=k_n+m-1+o(1)$. Also, $\log(\mathrm{det}(\bm\Omega^{-1}_1\bm\Omega_2))\asymp k_n\log(1+k_n/n)\asymp k^2_n/n=o(1)$. From the proof of Lemma 4, it follows that
\begin{eqnarray}
\lefteqn{(\bm\mu_1-\bm\mu_2)^T\bm\Omega^{-1}_1(\bm\mu_1-\bm\mu_2)}\nonumber\\
&\asymp&n\frac{k^2_n}{n^2}\bm Y^T_{,j}\bm X_n(\bm X^T_nX_n)^{-1}\bm G_{n,j}\bm G^T_{n,j}(\bm X^T_nX_n)^{-1}\bm X^T_n\bm Y_{,j}\nonumber\\
&\lesssim&n\frac{k^2_n}{n^2}\frac{1}{k_n}\frac{k^2_n}{n^2}\frac{n}{k_n}\bm Y^T_{,j}\bm Y_{,j}=o_{P_0}(1).\nonumber
\end{eqnarray}
Hence, the total variation distance between the posterior distribution of $\bm G_{n,j}^T\bm\beta_j$ and a Gaussian distribution with mean $\bm G_{n,j}^T(\bm X^T_n\bm X_n)^{-1}\bm X^T_n\bm Y_{,j}$ and dispersion matrix given by $\sigma^2\bm G_{n,j}^T(\bm X^T_n\bm X_n)^{-1}\bm G_{n,j}$ converges in $P_0-$ probability to zero for $j=1,\ldots,d$.
Since the posterior distributions of $\bm\beta_j$ and $\bm\beta_j'$ are mutually independent for $j\neq j';\,j,j'=1,\ldots,d$, we can assert that the posterior distribution of $\sqrt{n}\sum_{j=1}^d\bm G_{n,j}^T\bm\beta_j-\sqrt{n}\bm J^{-1}_{\bm\theta_0}\bm\Gamma(\bm f_0)$ can be approximated in total variation by $N(\bm\mu_n,\sigma^2\bm\Sigma_n)$.
\end{proof}

\begin{proof}[Proof of Theorem 3]
The marginal posterior of $\sigma^2$ is also inverse gamma with parameters $(dn+2a)/2$ and $b+\sum_{j=1}^d\bm Y^T_{,j}(\bm I_n-\bm P_{\bm X_n}(1+(k_n/n))^{-1})\bm Y_{,j}/2$, where $\bm P_{\bm X_n}=\bm X_n(\bm X^T_n\bm X_n)^{-1}\bm X^T_n$. Straightforward calculations show that
\begin{eqnarray}
\E(\sigma^2|\bm Y)&=&\frac{\frac{1}{2}\sum_{j=1}^d\left\{\bm Y^T_{,j}\bm Y_{,j}-\bm Y^T_{,j}\bm P_{\bm X_n}\bm Y_{,j}(1+k_nn^{-1})^{-1}\right\}+b}{\frac{1}{2}dn+a-1},\nonumber\\
\Var(\sigma^2|\bm Y)&=&\frac{\left(\E(\sigma^2|\bm Y)\right)^2}{\frac{1}{2}dn+a-2},\nonumber
\end{eqnarray}
which give rise to
$|\mathrm E(\sigma^2|\bm Y)-\sigma_0^2|=O_{P_{0}}(n^{-1/2})$ and $\Var(\sigma^2|\bm Y)=O_{P_{0}}(n^{-1})$. In particular, the marginal posterior distribution of $\sigma^2$ is consistent at the true value of error variance. Let $\mathscr{N}$ be an arbitrary neighborhood of $\sigma_0$. Then, $\Pi(\mathscr{N}^c|\bm Y)=o_{P_0}(1)$. We observe that
\begin{eqnarray}
\lefteqn{\sup_{B\in\mathscr{R}^p}\left|\Pi(\sqrt{n}(\bm\theta-\bm\theta_0)\in B|\bm Y)-\Phi(B;\bm\mu_n,\sigma_0^2\bm\Sigma_n)\right|}\nonumber\\
&\leq&\int\sup_{B\in\mathscr{R}^p}\left|\Pi(\sqrt{n}(\bm\theta-\bm\theta_0)\in B|\bm Y,\sigma)-\Phi(B;\bm\mu_n,\sigma^2\bm\Sigma_n)\right|d\Pi(\sigma|\bm Y)\nonumber\\
&&+\int\sup_{B\in\mathscr{R}^p}\left|\Phi(B;\bm\mu_n,\sigma^2\bm\Sigma_n)-\Phi(B;\bm\mu_n,\sigma_0^2\bm\Sigma_n)\right|d\Pi(\sigma|\bm Y)\nonumber\\
&\leq&\sup_{\sigma\in\mathscr{N}}\sup_{B\in\mathscr{R}^p}\left|\Pi(\sqrt{n}(\bm\theta-\bm\theta_0)\in B|\bm Y,\sigma)-\Phi(B;\bm\mu_n,\sigma^2\bm\Sigma_n)\right|\nonumber\\
&&+\sup_{\sigma\in\mathscr{N},B\in\mathscr{R}^p}\left|\Phi(B;\bm\mu_n,\sigma^2\bm\Sigma_n)-\Phi(B;\bm\mu_n,\sigma_0^2\bm\Sigma_n)\right|+2\Pi(\mathscr{N}^c|\bm Y).\nonumber
\end{eqnarray}
The total variation distance between the two normal distributions appearing in the second term is bounded by a constant multiple of $|\sigma-\sigma_0|$, and hence the term can be made arbitrarily small by choosing $\mathscr{N}$ appropriately. The first term converges in probability to zero by \eqref{thm2}. The third term converges in probability to zero by the posterior consistency. Hence, we get the desired result.
\end{proof}

\begin{proof}[Proof of Theorem 4]
According to the fitted model, $\bm Y_{i,}^{1\times d}\sim N_d((\bm X_n)_{i,}\bm B_n,\bm\Sigma^{d\times d})$ for $i=1,\ldots,n$. The logarithm of the posterior probability density function (p.d.f.) is negative half times
\begin{eqnarray}
\sum_{i=1}^n\left((\bm X_n)_{i,}\bm B_n-\bm Y_{i,}\right)\bm\Sigma^{-1}\left(B^T_n(\bm X^T_n)_{,i}-\bm Y^T_{i,}\right)+\sum_{j=1}^d\bm\beta_j^T\frac{\bm X^T_n\bm X_n}{nk_n^{-1}}\bm\beta_j,\label{logpost}
\end{eqnarray}
where $\bm B_n=(\bm\beta_1,\ldots,\bm\beta_d)$. The quadratic term in $\bm\beta_j$ above for $j=1,\ldots,d$, can be consolidated to
\begin{equation}
\mathrm{tr}\left(\left(\bm\Sigma^{-1}+\frac{k_n\bm I_d}{n}\right)\bm B^T_n\bm X^T_n\bm X_n\bm B_n\right).\label{thm3_3}
\end{equation}
The term in \eqref{logpost} which is linear in $\bm\beta_j$, $j=1,\ldots,d$, is given by
\begin{eqnarray}
\sum_{i=1}^n(\bm X_n)_{i,}(\bm\beta_1\ldots\bm\beta_d)\bm\Sigma^{-1}\bm Y^T_{i,}=\mathrm{tr}\left(\bm X_n\bm B_n\bm\Sigma^{-1}\bm Y^T\right)=\mathrm{tr}\left(\bm\Sigma^{-1}\bm Y^T\bm X_n\bm B_n\right).\nonumber
\end{eqnarray}
A completing square argument gives the posterior density to be proportional to
\begin{eqnarray}
\mathrm{exp}\left\{-\frac{1}{2}\mathrm{tr}\left[\left(\bm\Sigma^{-1}+\frac{k_n\bm I_d}{n}\right)\left(\bm B_n-(\bm X^T_n\bm X_n)^{-1}\bm X^T_n\bm Y\bm\Sigma^{-1}\left(\bm\Sigma^{-1}+\frac{k_n\bm I_d}{n}\right)^{-1}\right)^T\right.\right.\nonumber\\
\left.\left.\bm X^T_n\bm X_n\left(\bm B_n-(\bm X^T_n\bm X_n)^{-1}\bm X^T_n\bm Y\bm\Sigma^{-1}\left(\bm\Sigma^{-1}+\frac{k_n\bm I_d}{n}\right)^{-1}\right)\right]\right\},\nonumber
\end{eqnarray}
which can be identified with the pdf of a matrix normal distribution. More precisely,
\begin{eqnarray}
\mathrm{vec}(\bm B_n)|\bm Y&\sim& N\left(\mathrm{vec}\left((\bm X^T_n\bm X_n)^{-1}\bm X^T_n\bm Y\bm\Sigma^{-1}\left(\bm\Sigma^{-1}+\frac{k_n\bm I_d}{n}\right)^{-1}\right),\right.\nonumber\\
&&\left.\left(\bm\Sigma^{-1}+\frac{k_n\bm I_d}{n}\right)^{-1}\otimes(\bm X^T_n\bm X_n)^{-1}\right).\nonumber
\end{eqnarray}
Fixing a $j\in\{1,\ldots,d\}$, we observe that the posterior mean of $\bm\beta_j$ is a weighted sum of $(\bm X^T_n\bm X_n)^{-1}\bm X^T_n\bm Y_{,j'}$ for $j'=1,\ldots,d$. The weight attached with $(\bm X^T_n\bm X_n)^{-1}\bm X^T_n\bm Y_{,j}$ is of the order of $1$, whereas for $j'\neq j$, the contribution from $(\bm X^T_n\bm X_n)^{-1}\bm X^T_n\bm Y_{,j'}$ is of the order of $k_n/n$ which goes to zero as $n$ goes to infinity. Thus, the results of Lemmas 1 to 4 can be shown to hold under this setup.
We are interested in the limiting distribution of $\bm J^{-1}_{\bm\theta_0}\bm\Gamma(\bm f)=\sum_{j=1}^d\bm G_{n,j}^T\bm\beta_j=(\bm G_{n,1}^T\ldots\bm G_{n,d}^T)\mathrm{vec}(\bm B_n)$.
We note that the posterior distribution of $\left(\left(\bm\Sigma^{-1}+k_n\bm I_d/n\right)^{1/2}\otimes\bm I_{k_n+m-1}\right)\mathrm{vec}(\bm B_n)$ is a $(k_n+m-1)d$-dimensional normal distribution with mean vector and dispersion matrix being $\mathrm{vec}\left((\bm X^T_n\bm X_n)^{-1}\bm X^T_n\bm Y\bm\Sigma^{-1}\left(\bm\Sigma^{-1}+k_n\bm I_d/n\right)^{-1/2}\right)$ and $\bm I_d\otimes(\bm X^T_n\bm X_n)^{-1}$ respectively, since by the properties of Kronecker product, for the matrices $\bm A$, $\bm B$ and $\bm D$ of appropriate orders
 $(\bm B^T\otimes\bm A)\mathrm{vec}(\bm D)=\mathrm{vec}(\bm{ADB})$.\\
Let us consider the mean vector of the posterior distribution of the vector $\left(\left(\bm\Sigma^{-1}+k_n\bm I_d/n\right)^{1/2}\otimes\bm I_{k_n+m-1}\right)\mathrm{vec}(\bm B_n)$. We observe that
\begin{eqnarray}
\lefteqn{(\bm X^T_n\bm X_n)^{-1}\bm X^T_n\bm Y\bm\Sigma^{-1}\left(\bm\Sigma^{-1}+\frac{k_n\bm I_d}{n}\right)^{-1/2}}\nonumber\\
&=&(\bm X^T_n\bm X_n)^{-1}\bm X^T_n(\bm Y_{,1}\ldots\bm Y_{,d})\left(\bm \Sigma+\frac{k_n\bm\Sigma^2}{n}\right)^{-1/2}\nonumber\\
&=&(\bm X^T_n\bm X_n)^{-1}\bm X^T_n\left(\sum_{j=1}^d\bm Y_{,j}c_{j1}\ldots\sum_{j=1}^d\bm Y_{,j}c_{jd}\right),\nonumber
\end{eqnarray}
where $\bm C_n=(\!(c_{jk})\!)=\left(\bm \Sigma+k_n\bm\Sigma^2/n\right)^{-1/2}$. For $k=1,\ldots,d$, we define $\bm Z_k$ to be the sub-vector consisting of $[(k-1)(k_n+m-1)+1]^{th}$ to $[k(k_n+m-1)]^{th}$ elements of the vector $\left(\left(\bm\Sigma^{-1}+\frac{k_n\bm I_d}{n}\right)^{1/2}\otimes\bm I_{k_n+m-1}\right)\mathrm{vec}(\bm B_n)$.
Then $\bm Z_k|\bm Y\sim N_{k_n+m-1}\left({(\bm X^T_n\bm X_n)}^{-1}\bm X^T_n\sum_{j=1}^d\bm Y_{,j}c_{jk},{(\bm X^T_n\bm X_n)}^{-1}\right)$. Also, the posterior distributions of $\bm Z_k$ and $\bm Z_{k'}$ are mutually independent for $k\neq k';k,k'=1,\ldots,d$.
Now we will prove that the total variation distance between the posterior distribution of $\bm Z_k$ and $N\left({(\bm X^T_n\bm X_n)}^{-1}\bm X^T_n\sum_{j=1}^d\bm Y_{,j}\sigma^{jk},{(\bm X^T_n\bm X_n)}^{-1}\right)$ converges in $P_0-$probability to zero for $k=1,\ldots,d$, where $\bm\Sigma^{-1/2}=(\!(\sigma^{jk})\!)$.  The total variation distance between two multivariate normal distributions with equal dispersion matrix $(\bm X^T_n\bm X_n)^{-1}$ and mean vectors ${(\bm X^T_n\bm X_n)}^{-1}\bm X^T_n\sum_{j=1}^d\bm Y_{,j}c_{jk}$ and ${(\bm X^T_n\bm X_n)}^{-1}\bm X^T_n\sum_{j=1}^d\bm Y_{,j}\sigma^{jk}$ is bounded by $\sum_{j=1}^d\|{(\bm X^T_n\bm X_n)}^{-1/2}\bm X^T_n\bm Y_{,j}(c_{jk}-\sigma^{jk})\|$.
Fixing $k$, for $j=1,\ldots,d$, we have that
\begin{eqnarray}
\|{(\bm X^T_n\bm X_n)}^{-1/2}\bm X^T_n\bm Y_{,j}(c_{jk}-\sigma^{jk})\|&=&|c_{jk}-\sigma^{jk}|\left(\bm Y_{,j}^T\bm X_n{(\bm X^T_n\bm X_n)}^{-1}\bm X^T_n\bm Y_{,j}\right)^{1/2}\nonumber\\
&\leq&|c_{jk}-\sigma^{jk}|\left(\bm Y_{,j}^T\bm Y_{,j}\right),\nonumber
\end{eqnarray}
since the eigenvalues of $\bm X_n{(\bm X^T_n\bm X_n)}^{-1}\bm X^T_n$ are either zero or $1$. Since clearly $\bm C_n$ converges to $\bm\Sigma^{-1/2}$ at the rate $k_n/n$, we have for $j=1,\ldots,d$,
\begin{eqnarray}
\|{(\bm X^T_n\bm X_n)}^{-1}\bm X^T_n\bm Y_{,j}(c_{jk}-\sigma^{jk})\|\lesssim\frac{k_n}{n}O_{P_0}(\sqrt{n})=o_{P_0}(1).
\end{eqnarray}
Hence, we conclude that the total variation distance between the distributions $N\left({(\bm X^T_n\bm X_n)}^{-1}\bm X^T_n\sum_{j=1}^d\bm Y_{,j}c_{jk},{(\bm X^T_n\bm X_n)}^{-1}\right)$ and $N\left({(\bm X^T_n\bm X_n)}^{-1}\bm X^T_n\sum_{j=1}^d\bm Y_{,j}\sigma^{jk},{(\bm X^T_n\bm X_n)}^{-1}\right)$ converges to zero in $P_0-$probability.
Note that we can write $\left(\bm G_{n,1}^T\ldots\bm G_{n,d}^T\right)\mathrm{vec}(\bm B_n)$ in terms of $\bm Z_k$ as
\begin{eqnarray}
\sum_{k=1}^d\mathrm{cols}_{(k-1)(k_n+m-1)+1}^{k(k_n+m-1)}\left(\left(\bm G_{n,1}^T\ldots\bm G_{n,d}^T\right)\left(\left(\bm\Sigma^{-1}+\frac{k_n\bm I_d}{n}\right)^{1/2}\otimes\bm I_{k_n+m-1}\right)^{-1}\right)\times\bm Z_k.\nonumber
\end{eqnarray}
Since the posterior distributions of $\bm Z_k$, $k=1,\ldots,d$ are independent, we therefore obtain
\begin{eqnarray}
\left\|\left(\sqrt{n}\left(\bm G_{n,1}^T\ldots\bm G_{n,d}^T\right)\mathrm{vec}(\bm B_n)-\sqrt{n}\bm J^{-1}_{\bm\theta_0}(\bm f_0)\right)-N(\bm\mu_n^{**},\bm\Sigma_n^{**})\right\|_{TV}=o_{P_0}(1),\nonumber
\end{eqnarray}
where $\bm\mu^{**}_n$ is given by
\begin{eqnarray}
&&\sqrt{n}\sum_{k=1}^d\mathrm{cols}_{(k-1)(k_n+m-1)+1}^{k(k_n+m-1)}\left(\left(\bm G^T_{n,1}\ldots\bm G^T_{n,d}\right)\left(\left(\bm\Sigma^{-1}+\frac{k_n\bm I_d}{n}\right)^{1/2}\otimes\bm I_{k_n+m-1}\right)^{-1}\right)\nonumber\\
&&\times{(\bm X^T_n\bm X_n)}^{-1}\bm X_n^T\sum_{j=1}^d\bm Y_{,j}\sigma^{jk}
-\bm J_{\bm\theta_0}^{-1}\sqrt{n}\bm\Gamma(\bm f_0),\nonumber
\end{eqnarray}
and $\bm\Sigma^{**}_n$ is given by
\begin{eqnarray}
\lefteqn{n\sum_{k=1}^d\mathrm{cols}_{(k-1)(k_n+m-1)+1}^{k(k_n+m-1)}\left(\left(\bm G^T_{n,1}\ldots\bm G^T_{n,d}\right)\left(\left(\bm\Sigma^{-1}+\frac{k_n\bm I_d}{n}\right)^{1/2}\otimes\bm I_{k_n+m-1}\right)^{-1}\right)}\nonumber\\
&&\times{(\bm X^T_n\bm X_n)}^{-1}\nonumber\\
&&\times\mathrm{rows}_{(k-1)(k_n+m-1)+1}^{k(k_n+m-1)}\left(\left(\left(\bm\Sigma^{-1}+\frac{k_n\bm I_d}{n}\right)^{1/2}\otimes\bm I_{k_n+m-1}\right)^{-1}\left(\bm G^T_{n,1}\ldots\bm G^T_{n,d}\right)^T\right).\nonumber
\end{eqnarray}
Following the steps of the proof of Lemma 4, it can be shown that the eigenvalues of the matrix $\bm\Sigma^*_n$ mentioned in the statement of Theorem 4 are bounded away from zero and infinity. We can show that the Kullback-Leibler divergence of $N(\bm\mu^{**}_n,\bm\Sigma^{**}_n)$ from $N(\bm\mu^{*}_n,\sigma^2\bm\Sigma^{*}_n)$ converges in probability to zero by going through some routine matrix manipulations. Hence,
\begin{eqnarray}
\left\|\left(\sqrt{n}\left(\bm G_{n,1}^T\ldots\bm G_{n,d}^T\right)\mathrm{vec}(\bm B_n)-\sqrt{n}\bm J^{-1}_{\bm\theta_0}(\bm f_0)\right)-N(\bm\mu_n^{*},\sigma^2\bm\Sigma_n^{*})\right\|_{TV}=o_{P_0}(1).\nonumber
\end{eqnarray}
The above expression is equivalent to \eqref{thm2_1} of the proof of Theorem 2. Following steps similar to those of Theorem 2, we get \eqref{thm4_1}. We obtain \eqref{thm4_2} by following the proof of Theorem 3.
\end{proof}

\section{Appendix}

We need to go through a couple of lemmas in order to prove the main results. We denote by $\E_0(\cdot)$ and ${\Var}_0(\cdot)$ the expectation and variance operators respectively with respect to $P_0-$ probability. The following lemma helps to estimate the bias of the Bayes estimator.
\begin{lemma}
For $m\geq2$ and $k_n$ satisfying $n^{1/2m}\ll k_n\ll n$, for $r=0,1$, $\sup_{t\in[0,1]}|\E_0(\E(f^{(r)}(t)|\bm Y))-f^{(r)}_{0}(t)|=o(k_n^{r+1/2}/\sqrt{n})$.
\end{lemma}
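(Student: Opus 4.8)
The plan is to reduce the statement to a deterministic bound on the bias of ordinary least-squares spline smoothing and then apply standard B-spline approximation inequalities. Working with $d=1$ as in the statement, the posterior \eqref{posterior} gives that the posterior mean of $f^{(r)}(t)=\bm\beta^T\bm N^{(r)}(t)$ equals $c_n^{-1}\bm N^{(r)}(t)^T(\bm X_n^T\bm X_n)^{-1}\bm X_n^T\bm Y$, where $c_n=1+\sigma^2 k_n/n$. Taking $\E_0$ and using $\E_0\bm Y=\bm f_{0,n}:=(f_0(x_1),\ldots,f_0(x_n))^T$,
\[
\E_0(\E(f^{(r)}(t)|\bm Y))=c_n^{-1}g^{(r)}(t),\qquad g(t):=\bm N(t)^T\hat{\bm\beta}_0,\quad \hat{\bm\beta}_0:=(\bm X_n^T\bm X_n)^{-1}\bm X_n^T\bm f_{0,n},
\]
so $g$ is the spline fitted by least squares to the noiseless responses. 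Hence the bias equals $c_n^{-1}(g^{(r)}(t)-f_0^{(r)}(t))+(c_n^{-1}-1)f_0^{(r)}(t)$. As $c_n^{-1}-1=O(k_n/n)$ and $f_0^{(r)}$ is bounded on $[0,1]$, the second summand is $O(k_n/n)$ uniformly, which is $o(k_n^{r+1/2}/\sqrt n)$ once $k_n\ll n$. So it remains to show $\sup_t|g^{(r)}(t)-f_0^{(r)}(t)|=o(k_n^{r+1/2}/\sqrt n)$.

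For this I would compare $g$ with a fixed good spline approximant. Choose $\bm\beta_0^*$ so that $s^*:=\bm N(\cdot)^T\bm\beta_0^*$ is a de Boor quasi-interpolant of $f_0$; since $f_0\in C^m([0,1])$ and the B-splines have order $m\ge 2$, this gives $\|(f_0-s^*)^{(r)}\|_\infty=O(k_n^{-(m-r)})$ for $r=0,1$. Write $g^{(r)}(t)-f_0^{(r)}(t)=\bm N^{(r)}(t)^T(\hat{\bm\beta}_0-\bm\beta_0^*)+(s^{*(r)}(t)-f_0^{(r)}(t))$; the last term is $O(k_n^{-(m-r)})$. For the first, $\hat{\bm\beta}_0-\bm\beta_0^*=(\bm X_n^T\bm X_n)^{-1}\bm X_n^T(\bm f_{0,n}-\bm X_n\bm\beta_0^*)$, and because the hat matrix $\bm X_n(\bm X_n^T\bm X_n)^{-1}\bm X_n^T$ has eigenvalues in $\{0,1\}$,
\[
\|\hat{\bm\beta}_0-\bm\beta_0^*\|\le \mathrm{maxeig}((\bm X_n^T\bm X_n)^{-1})^{1/2}\,\|\bm f_{0,n}-\bm X_n\bm\beta_0^*\|.
\]
Now $\bm f_{0,n}-\bm X_n\bm\beta_0^*$ has $i$-th coordinate $f_0(x_i)-s^*(x_i)$, so its Euclidean norm is at most $\sqrt n\,\|f_0-s^*\|_\infty=O(\sqrt n\,k_n^{-m})$; and, under the design assumption $\sup_t|Q_n(t)-t|=o(k_n^{-1})$, the Gram matrix $\bm X_n^T\bm X_n$ has smallest eigenvalue of order $n/k_n$ (the standard stability bound for the B-spline basis on a quasi-uniform mesh), whence $\mathrm{maxeig}((\bm X_n^T\bm X_n)^{-1})=O(k_n/n)$. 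Combining, $\|\hat{\bm\beta}_0-\bm\beta_0^*\|=O(k_n^{1/2-m})$. Finally, at any $t$ at most $m$ of the $N_j$ are nonzero, with $\|\bm N(t)\|^2=\sum_jN_j(t)^2\le\sum_jN_j(t)=1$ and $|N_j'(t)|=O(k_n)$, so $\|\bm N^{(r)}(t)\|=O(k_n^r)$ and therefore $|\bm N^{(r)}(t)^T(\hat{\bm\beta}_0-\bm\beta_0^*)|=O(k_n^{r+1/2-m})$ uniformly in $t$.

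Putting the pieces together, $\sup_t|g^{(r)}(t)-f_0^{(r)}(t)|=O(k_n^{r+1/2-m})$, so the full bias is $O(k_n^{r+1/2-m})+O(k_n/n)$. The first term is $o(k_n^{r+1/2}/\sqrt n)$ precisely when $\sqrt n=o(k_n^{m})$, i.e. $n^{1/2m}\ll k_n$, which is the hypothesis; the second term was already handled using $k_n\ll n$. I expect the only nonroutine ingredient to be the eigenvalue bound $\mathrm{mineig}(\bm X_n^T\bm X_n)\asymp n/k_n$, which is precisely where the near-uniformity of the design enters; everything else is standard spline bookkeeping. As an alternative to the quasi-interpolant comparison one could invoke directly the $L_\infty$-boundedness of the least-squares spline projection (giving the slightly sharper rate $O(k_n^{-(m-r)})$ for $\sup_t|g^{(r)}-f_0^{(r)}|$), but the argument above is self-contained once the eigenvalue estimates are granted.
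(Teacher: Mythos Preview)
Your argument is correct and follows essentially the same route as the paper: both introduce a spline quasi-interpolant $\bm\beta^*$ (the paper cites de~Boor for \eqref{spldis}), split the bias into the prior--shrinkage piece $(c_n^{-1}-1)\times(\text{bounded})$, the direct approximation error $s^{*(r)}-f_0^{(r)}$, and the least-squares correction $(\bm N^{(r)}(t))^T(\hat{\bm\beta}_0-\bm\beta^*)$, and then bound the last via Cauchy--Schwarz together with the spline Gram-matrix estimate. The only cosmetic difference is that the paper invokes the single Zhou--Wolfe bound $(\bm N^{(r)}(t))^T(\bm X_n^T\bm X_n)^{-1}\bm N^{(r)}(t)\asymp k_n^{2r+1}/n$, whereas you reconstruct it from the two ingredients $\|\bm N^{(r)}(t)\|=O(k_n^{r})$ and $\mathrm{mineig}(\bm X_n^T\bm X_n)\asymp n/k_n$; the resulting rates are identical.
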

\begin{proof}
We note that $f^{(r)}(t)=(\bm N^{(r)}(t))^T\bm\beta$ for $r=0,1$ with $\bm N^{(r)}(\cdot)$ standing for the $r^{th}$ order derivative of $\bm N(\cdot)$. By \eqref{posterior},
\begin{equation}
\E(f^{(r)}(t)|\bm Y)={\left(1+\frac{k_n\sigma^2}{n}\right)}^{-1}(\bm N^{(r)}(t))^T{({\bm X^T_n\bm X_n})}^{-1}\bm X^T_n\bm Y.\label{postexp}
\end{equation}
\citet{zhou2000derivative} showed that
\begin{eqnarray}
(\bm N^{(r)}(t))^T{({\bm X^T_n\bm X_n})}^{-1}\bm N^{(r)}(t)
\asymp\frac{k_n^{2r+1}}{n}.\label{var}
\end{eqnarray}
Since $ f^{(r)}_0\in C^{(m-r)}$, there exists a $\bm\beta^*$ \citep[Theorem XII.4, page 178]{de1978practical} such that
\begin{equation}
\sup_{t\in[0,1]}|f^{(r)}_{0}(t)-(\bm N^{(r)}(t))^T\bm\beta^*|=O({k_n^{-(m-r)}}).\label{spldis}
\end{equation}
For any $t\in[0,1]$, we can bound the absolute bias of $\E(f^{(r)}_0(t)|\bm Y)$ multiplied with $\sqrt{n}k_n^{-r-1/2}$ by %\begin{small}
\begin{eqnarray}
\lefteqn{\sqrt{n}k_n^{-r-1/2}\sup_{t\in[0,1]}|\E_0(\E(f^{(r)}(t)|\bm{Y}))-f^{(r)}_{0}(t)|}\nonumber\\
&\leq&\sqrt{n}k_n^{-r-1/2}\sup_{t\in[0,1]}\left|{\left(1+\frac{k_n\sigma^2}{n}\right)}^{-1}(\bm N^{(r)}(t))^T\bm\beta^*-(\bm N^{(r)}(t))^T\bm\beta^*\right|\nonumber\\
&&+\sqrt{n}k_n^{-r-1/2}{\left(1+\frac{k_n\sigma^2}{n}\right)}^{-1}\sup_{t\in[0,1]}|(\bm N^{(r)}(t))^T{({\bm X^T_n\bm X_n})}^{-1}\bm X^T_n(f_{0}(\bm{x})-\bm X_n\bm\beta^*)|\nonumber\\
&&+\sqrt{n}k_n^{-r-1/2}\sup_{t\in[0,1]}|f^{(r)}_{0}(t)-(\bm N^{(r)}(t))^T\bm\beta^*|.\nonumber
\end{eqnarray}
%\end{small}
Using the fact that $\sup_{t\in[0,1]}|(\bm N^{(r)}(t))^T\bm\beta^*|=O(1)$, first term on the right hand side of the previous inequality is of the order of $k_n^{-r+1/2}/\sqrt{n}$. Using the Cauchy-Schwarz inequality, \eqref{var} and \eqref{spldis}, we can bound the second term up to a constant multiple by $\sqrt{n}k_n^{-m}$. The third term has the order of $\sqrt{n}k_n^{-m-1/2}$ as a result of \eqref{spldis}.
By the assumed conditions on $m$ and $k_n$, the assertion holds.
\end{proof}

The following lemma controls posterior variability.
\begin{lemma}
If $m\geq5$ and $n^{1/2m}\ll k_n\ll n^{1/8}$, then for $r=0,1$ and for all $\epsilon>0$, $\Pi\left(\sqrt{n}\sup_{t\in[0,1]}|f^{(r)}(t)-f^{(r)}_{0}(t)|^2>\epsilon|\bm Y\right)=o_{P_0}(1)$.
\end{lemma}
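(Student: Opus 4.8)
The plan is to split $f^{(r)}(t)-f^{(r)}_0(t)$ into a posterior-fluctuation part, a data-dependent centering part, and a deterministic bias part, and to control each after multiplying by $\sqrt n$ and squaring. Writing $\E(\bm\beta|\bm Y)$ for the posterior mean in \eqref{posterior}, using $f^{(r)}(t)=\bm N^{(r)}(t)^T\bm\beta$ and \eqref{postexp}, I would write, for $r=0,1$,
\begin{align*}
f^{(r)}(t)-f^{(r)}_0(t)=&\,\bm N^{(r)}(t)^T\big(\bm\beta-\E(\bm\beta|\bm Y)\big)+\big(\E(f^{(r)}(t)|\bm Y)-\E_0\E(f^{(r)}(t)|\bm Y)\big)\\
&+\big(\E_0\E(f^{(r)}(t)|\bm Y)-f^{(r)}_0(t)\big),
\end{align*}
and call the three summands $A_n(t)$, $B_n(t)$, $C_n(t)$: $C_n$ is deterministic, $B_n$ depends on the data only (not the posterior draw), and $A_n$ carries the posterior randomness. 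Since $|A_n+B_n+C_n|^2\le 3(|A_n|^2+|B_n|^2+|C_n|^2)$, it is enough to show $\sqrt n\sup_t|C_n(t)|^2\to0$, $\sqrt n\sup_t|B_n(t)|^2=o_{P_0}(1)$, and $\Pi(\sqrt n\sup_t|A_n(t)|^2>\epsilon|\bm Y)=o_{P_0}(1)$; conditioning on $\bm Y$, the first two contribute additive $o_{P_0}(1)$ terms (the first deterministically).

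The bias $C_n$ is handled directly by Lemma 1, which gives $\sup_t|C_n(t)|=o(k_n^{r+1/2}/\sqrt n)$, hence $\sqrt n\sup_t|C_n(t)|^2=o(k_n^{2r+1}/\sqrt n)$; for $r\le1$ this tends to $0$ under $k_n\ll n^{1/8}$, since $k_n^{2r+1}\le k_n^3\ll n^{3/8}$. The core of the argument is a single uniform bound: for any vector $\bm v$,
\begin{align*}
\sup_{t\in[0,1]}\big|\bm N^{(r)}(t)^T\bm v\big|\lesssim k_n^{r}\|\bm v\|,
\end{align*}
valid because at most $m$ B-splines are nonzero at any $t$ and $\|N_j^{(r)}\|_\infty\lesssim k_n^{r}$ (for $r=0$ one simply uses $\|\bm N(t)\|\le1$). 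Applying this with $\bm v=\bm\beta-\E(\bm\beta|\bm Y)\sim N(\bm 0,c_n^{-1}\sigma^2(\bm X^T_n\bm X_n)^{-1})$ conditionally on $\bm Y$ gives
\begin{align*}
\E\big[\sup_t|A_n(t)|^2\,\big|\,\bm Y\big]\lesssim k_n^{2r}c_n^{-1}\sigma^2\,\mathrm{tr}\big((\bm X^T_n\bm X_n)^{-1}\big)\asymp\frac{k_n^{2r+2}}{n},
\end{align*}
using $\mathrm{maxeig}((\bm X^T_n\bm X_n)^{-1})\asymp k_n/n$ (compatible with \eqref{var}) and the dimension $k_n+m-1$; Markov's inequality then yields $\Pi(\sqrt n\sup_t|A_n(t)|^2>\epsilon|\bm Y)\lesssim\epsilon^{-1}k_n^{2r+2}/\sqrt n$, which is $o(1)$ for $r\le1$ exactly when $k_n\ll n^{1/8}$. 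For the centering term, $B_n(t)=c^{-1}_n\bm N^{(r)}(t)^T(\bm X^T_n\bm X_n)^{-1}\bm X^T_n\bm\varepsilon$ by \eqref{postexp} and $\bm Y=f_0(\bm x)+\bm\varepsilon$, so the same uniform bound gives $\sup_t|B_n(t)|^2\lesssim k_n^{2r}\|(\bm X^T_n\bm X_n)^{-1}\bm X^T_n\bm\varepsilon\|^2$, and $\E_0\|(\bm X^T_n\bm X_n)^{-1}\bm X^T_n\bm\varepsilon\|^2=\sigma_0^2\,\mathrm{tr}((\bm X^T_n\bm X_n)^{-1})\asymp k_n^2/n$, whence $\sqrt n\sup_t|B_n(t)|^2=O_{P_0}(k_n^{2r+2}/\sqrt n)=o_{P_0}(1)$. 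The conditions $m\ge5$ and $n^{1/2m}\ll k_n$ are just what keeps the window $(n^{1/2m},n^{1/8})$ nonempty and allows Lemma 1 to be invoked.

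The hard part is the passage from pointwise (or $L^2$) control to the uniform-in-$t$ statement: this is where the factor $k_n^{r}$ from the B-spline derivative estimates enters, and it is exactly this factor, together with $\mathrm{tr}((\bm X^T_n\bm X_n)^{-1})\asymp k_n^2/n$ and the extra $\sqrt n$, that pins down the sharp upper bound $k_n\ll n^{1/8}$ in the binding case $r=1$ (for $r=0$ the same computation only needs $k_n\ll n^{1/4}$). One should double-check whether the crude bound $\sup_t|\bm N^{(r)}(t)^T\bm v|\lesssim k_n^{r}\|\bm v\|$ is already tight enough or whether a finer chaining argument over $t\in[0,1]$ (exploiting that $\bm N^{(r)}(t)^T\bm v$ is piecewise polynomial on $\asymp k_n$ subintervals) is needed to absorb logarithmic factors; the arithmetic above suggests the crude version suffices under the stated hypotheses.
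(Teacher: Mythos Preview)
Your proof is correct and structurally identical to the paper's: the same three-way split into posterior fluctuation $A_n$, data-dependent centering $B_n$, and deterministic bias $C_n$, with Markov's inequality and Lemma~1 playing the same roles, and each random piece bounded at order $k_n^{2r+2}/n$. The only difference is in how the supremum over $t$ is handled. You use the elementary bound $\sup_t|\bm N^{(r)}(t)^T\bm v|\lesssim k_n^r\|\bm v\|$ (local support plus $\|N_j^{(r)}\|_\infty\lesssim k_n^r$) and then control $\|\bm v\|^2$ via $\mathrm{tr}((\bm X_n^T\bm X_n)^{-1})\asymp k_n^2/n$. The paper instead invokes the Zhou--Wolfe estimate $(\bm N^{(r)}(t))^T(\bm X_n^T\bm X_n)^{-1}\bm N^{(r)}(t)\asymp k_n^{2r+1}/n$ (its equation \eqref{var}) together with Cauchy--Schwarz, and for $B_n$ additionally discretizes $[0,1]$ on a grid of mesh $1/n$ and uses the mean value theorem (bringing in $\bm N^{(r+1)}$) to pass from the discrete maximum to the continuous supremum. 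Both routes give the same rate; yours is more direct and avoids the discretization step, at the cost of a slightly cruder constant. Your closing caveat about chaining is unnecessary: the crude bound already suffices, as your own arithmetic shows.
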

\begin{proof}
By Markov's inequality and the fact that $|a+b|^2\leq 2(|a|^2+|b|^2)$ for two real numbers $a$ and $b$, we can bound
$\Pi\left(\sup_{t\in[0,1]}\sqrt{n}|f^{(r)}(t)-f^{(r)}_{0}(t)|^2>\epsilon|\bm Y\right)$ by
\begin{eqnarray}
\lefteqn{2\frac{\sqrt{n}}{\epsilon}\left\{\sup_{t\in[0,1]}\left|\E(f^{(r)}(t)|\bm Y)-f^{(r)}_{0}(t)\right|^2\right\}}\nonumber\\
&&\left.+\E\left[\sup_{t\in[0,1]}\left|f^{(r)}(t)-\E(f^{(r)}(t)|\bm Y)\right|^2\large{|}\bm Y\right]\right\}.\label{chv}
\end{eqnarray}
Now we obtain the asymptotic orders of the expectations of the two terms inside the bracket above. We can bound the expectation of the first term by
\begin{eqnarray}
\lefteqn{2\sup_{t\in[0,1]}\left|\E_0(\E(f^{(r)}(t)|\bm Y))-f^{(r)}_{0}(t)\right|^2}\nonumber\\
&&+2\E_0\left[\sup_{t\in[0,1]}\left|\E(f^{(r)}(t)|\bm Y)-\E_0(\E[f^{(r)}(t)|\bm Y])\right|^2\right].\label{chv_1}
\end{eqnarray}
Using \eqref{postexp}, $\sup_{t\in[0,1]}\left|\E(f^{(r)}(t)|\bm Y)-\E_0(\E[f^{(r)}(t)|\bm Y])\right|$ can be bounded up to a constant multiple by
\begin{eqnarray}
\lefteqn{\max_{1\leq k\leq n}\left|(\bm N^{(r)}(s_k))^T{({\bm X^T_n\bm X_n})}^{-1}\bm X^T_n\bm\varepsilon\right|}\nonumber\\
&&+\sup_{t,t':|t-t'|\leq n^{-1}}\left|(\bm N^{(r)}(t)-\bm N^{(r)}(t'))^T{({\bm X^T_n\bm X_n})}^{-1}\bm X^T_n\bm\varepsilon\right|\nonumber,
\end{eqnarray}
where $s_k=k/n$ for $k=1,\ldots,n$. Applying the mean value theorem to the second term of the above sum, we can bound the expression inside the $\E_0$ expectation in the second term of \eqref{chv_1} by a constant multiple of
\begin{eqnarray}
\lefteqn{\max_{1\leq k\leq n}\left|(\bm N^{(r)}(s_k))^T{({\bm X^T_n\bm X_n})}^{-1}\bm X^T_n\bm\varepsilon\right|^2}\nonumber\\
&&+\sup_{t\in[0,1]}\frac{1}{n^2}\left|(\bm N^{(r+1)}(t))^T{({\bm X^T_n\bm X_n})}^{-1}\bm X^T_n\bm\varepsilon\right|^2.\label{A1}
\end{eqnarray}
By the spectral decomposition, we can write $\bm X_n{({\bm X^T_n\bm X_n})}^{-1}\bm X^T_n=\bm P^T\bm D\bm P$, where $\bm P$ is an orthogonal matrix and $\bm D$ is a diagonal matrix with $k_n+m-1$ ones and $n-k_n-m+1$ zeros in the diagonal. Now using the Cauchy-Schwarz inequality, we get
\begin{eqnarray}
\lefteqn{\E_0\left(\max_{1\leq k\leq n}\left|(\bm N^{(r)}(s_k))^T{({\bm X^T_n\bm X_n})}^{-1}\bm X^T_n\bm\varepsilon\right|^2\right)}\nonumber\\
&\leq&\max_{1\leq k\leq n}\left\{(\bm N^{(r)}(s_k))^T{({\bm X^T_n\bm X_n})}^{-1}\bm N^{(r)}(s_k)\right\}
\E_0\left(\bm\varepsilon^T\bm P^T\bm D\bm P\bm\varepsilon\right).\nonumber
\end{eqnarray}
By Lemma 5.4 of \citet{zhou2000derivative} and the fact that $\bm\Var_0(\bm P\bm\varepsilon)=\bm\Var_0(\bm\varepsilon)$, we can conclude that the expectation of the first term of \eqref{A1} is $O(k_n^{2r+2}/n)$. Again applying the Cauchy-Schwarz inequality, the second term of \eqref{A1} is bounded by
\begin{eqnarray}
\sup_{t\in[0,1]}\left\{\frac{1}{n^2}(\bm N^{(r+1)}(t))^T{(\bm X_n^T\bm X_n)}^{-1} \bm N^{(r+1)}(t)\right\}(\bm\varepsilon^T\bm\varepsilon),\nonumber
\end{eqnarray}
whose expectation is of the order $n(k_n^{2r+3}/n^3)=k_n^{2r+3}/n^2$, using Lemma 5.4 of Zhou and Wolfe (2000).
Thus, the expectation of the bound given by \eqref{A1} is of the order $k_n^{2r+2}/n$. Combining it with \eqref{chv_1} and Lemma 1, we get
\begin{equation}
\E_0\left[\sup_{t\in[0,1]}\left|\E(f^{(r)}(t)|\bm Y)-f^{(r)}_{0}(t)\right|^2\right]=O\left(\frac{k_n^{2r+2}}{n}\right).\label{chv_2}
\end{equation}
Define $\bm \varepsilon^*:={(\bm X^T_n\bm X_n)}^{1/2}\bm\beta-{\left(1+\frac{\sigma^2k_n}{n}\right)}^{-1}{(\bm X^T_n\bm X_n)}^{-1/2}\bm X^T_n\bm Y$. Note that $\bm\varepsilon^*|\bm Y\sim N(\bm 0, {\left({\sigma^{-2}}+{k_n/n}\right)}^{-1}\bm I_{k_n+m-1})$. Expressing $\sup_{t\in[0,1]}|f^{(r)}(t)-\E[f^{(r)}(t)|\bm Y]|$ as $\sup_{t\in[0,1]}\left|(\bm N^{(r)}(t))^T{({\bm X^T_n\bm X_n})}^{-1/2}\bm\varepsilon^*\right|$ and using the Cauchy-Schwarz inequality and Lemma 5.4 of \citet{zhou2000derivative}, the second term inside the bracket in \eqref{chv} is seen to be $O(k_n^{2r+2}/n)$. Combining it with \eqref{chv} and \eqref{chv_2} and using $2r+2\leq4$, we have the desired assertion.
\end{proof}

Lemmas 1 and 2 can be used to prove the posterior consistency of $\bm\theta$ as shown in the next lemma.
\begin{lemma}
If $m\geq5$ and $n^{1/2m}\ll k_n\ll n^{1/8}$, then for all $\epsilon>0$, $\Pi(\|\bm\theta-\bm\theta_0\|>\epsilon|\bm Y)=o_{P_0}(1).$
\end{lemma}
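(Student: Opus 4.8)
The plan is to run the classical argmin-consistency argument for M-estimators (in the spirit of Theorem 5.7 of van der Vaart's \emph{Asymptotic Statistics}), but applied conditionally on the data to the random function $f(\cdot)=\bm B_n^T\bm N(\cdot)$ drawn from its posterior. Recall that here $\bm\theta=\bm\psi(f)$ is the minimizer over the compact set $\bm\Theta$ of $R_f(\bm\eta)=\bigl(\int_0^1(f'(t)-F(t,f(t),\bm\eta))^2w(t)\,dt\bigr)^{1/2}$, and $\bm\theta_0=\bm\psi(f_0)$. The two ingredients are: (i) $R_f(\cdot)$ converges to $R_{f_0}(\cdot)$ uniformly over $\bm\Theta$, with high posterior probability; and (ii) the well-separation hypothesis \eqref{assmp}. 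Lemma 2 supplies exactly the uniform closeness of $f,f'$ to $f_0,f_0'$ needed for (i), and the only genuinely delicate point (see the last paragraph) is arranging the constants in (i) to be uniform in $\bm\eta$ and independent of the posterior draw.

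\textbf{Step 1 (uniform convergence of the criterion).} Put $A_n=\{f:\sup_t|f(t)-f_0(t)|\le1,\ \sup_t|f'(t)-f_0'(t)|\le1\}$; by Lemma 2, $\Pi(A_n^c\mid\bm Y)=o_{P_0}(1)$. On $A_n$ the draw $f$ takes values in the fixed compact set $K=\{y:|y-f_0(t)|\le1\ \text{for some}\ t\in[0,1]\}$ and $f'$ is uniformly bounded; since $\bm F$ and $D_{0,1,0}F$ are continuous, they are bounded and uniformly continuous on $[0,1]\times K\times\bm\Theta$. Writing, for fixed $\bm\eta$, $g(t)=f'(t)-F(t,f(t),\bm\eta)$ and $g_0(t)=f_0'(t)-F(t,f_0(t),\bm\eta)$, one has $R_f(\bm\eta)^2-R_{f_0}(\bm\eta)^2=\int_0^1(g-g_0)(g+g_0)w$, with $\sup_t|g(t)-g_0(t)|\le\sup_t|f'(t)-f_0'(t)|+L\sup_t|f(t)-f_0(t)|$ by the mean value theorem (here $L=\sup|D_{0,1,0}F|$ over $[0,1]\times K\times\bm\Theta$) and $\sup_t|g(t)+g_0(t)|$ bounded on $A_n$ by a constant. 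Hence there is a constant $C$, independent of the draw and of $\bm\eta$, with
\[
\sup_{\bm\eta\in\bm\Theta}\bigl|R_f(\bm\eta)^2-R_{f_0}(\bm\eta)^2\bigr|\le C\Bigl(\sup_t|f(t)-f_0(t)|+\sup_t|f'(t)-f_0'(t)|\Bigr)\quad\text{on }A_n,
\]
and therefore, using $|\sqrt a-\sqrt b|\le\sqrt{|a-b|}$, also $\sup_{\bm\eta}|R_f(\bm\eta)-R_{f_0}(\bm\eta)|\le\sqrt{C(\sup_t|f-f_0|+\sup_t|f'-f_0'|)}$ on $A_n$.

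\textbf{Step 2 (deterministic argmin inequality and conclusion).} Fix $\epsilon>0$ and set $\delta=\inf_{\|\bm\eta-\bm\theta_0\|\ge\epsilon}R_{f_0}(\bm\eta)-R_{f_0}(\bm\theta_0)>0$ by \eqref{assmp}; note $R_f$ is continuous on the compact $\bm\Theta$ for $f\in C^1$, so $\bm\psi(f)$ is well defined. If $f\in A_n$ and $\sup_{\bm\eta}|R_f(\bm\eta)-R_{f_0}(\bm\eta)|<\delta/2$, then, since $\bm\theta=\bm\psi(f)$ minimizes $R_f$,
\[
R_{f_0}(\bm\theta)<R_f(\bm\theta)+\delta/2\le R_f(\bm\theta_0)+\delta/2<R_{f_0}(\bm\theta_0)+\delta=\inf_{\|\bm\eta-\bm\theta_0\|\ge\epsilon}R_{f_0}(\bm\eta),
\]
which forces $\|\bm\theta-\bm\theta_0\|<\epsilon$. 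Combining with Step 1, on $A_n$ the event $\{\|\bm\theta-\bm\theta_0\|\ge\epsilon\}$ is contained in $\{\sup_t|f-f_0|\ge c\}\cup\{\sup_t|f'-f_0'|\ge c\}$ with $c=\delta^2/(8C)$. Consequently
\[
\Pi(\|\bm\theta-\bm\theta_0\|\ge\epsilon\mid\bm Y)\le\Pi(A_n^c\mid\bm Y)+\sum_{r=0}^{1}\Pi\bigl(\sup_t|f^{(r)}(t)-f_0^{(r)}(t)|\ge c\mid\bm Y\bigr),
\]
and each term on the right is $o_{P_0}(1)$ by Lemma 2 (take $\epsilon=1$ in that lemma and use $\sqrt n\,c^2\to\infty$, so that for large $n$ the event $\{\sup_t|f^{(r)}-f_0^{(r)}|\ge c\}$ lies inside $\{\sqrt n\sup_t|f^{(r)}-f_0^{(r)}|^2>1\}$; likewise for $A_n^c$). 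This proves the lemma.

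\textbf{Anticipated main obstacle.} Everything reduces to producing, in Step 1, a bound on $\sup_{\bm\eta\in\bm\Theta}|R_f(\bm\eta)^2-R_{f_0}(\bm\eta)^2|$ whose constant is uniform over $\bm\eta$ \emph{and} does not depend on the random posterior draw $f$. This is precisely why one must first restrict to the high-posterior-probability event $A_n$ on which $f$ is confined to a fixed compact tube and $f'$ is uniformly bounded; after that, boundedness and uniform continuity of $\bm F$ and $D_{0,1,0}F$ on compacts, together with the elementary estimates above, give the uniform bound, and the remainder of the argument is the standard argmin scheme plus bookkeeping with Lemma 2.
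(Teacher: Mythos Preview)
Your proof is correct and follows essentially the same argmin-consistency scheme as the paper: restrict to a high-posterior-probability set where $f,f'$ are uniformly close to $f_0,f_0'$ (via Lemma~2), deduce uniform-in-$\bm\eta$ closeness of $R_f$ to $R_{f_0}$, and then invoke the well-separation assumption \eqref{assmp} exactly as you do in Step~2. The only tactical difference is that the paper bounds $|R_f(\bm\eta)-R_{f_0}(\bm\eta)|$ directly and linearly via the triangle inequality for $\|\cdot\|_w$ (writing $|R_f-R_{f_0}|\le\|f'-f_0'\|_w+\|F(\cdot,f,\bm\eta)-F(\cdot,f_0,\bm\eta)\|_w$), whereas you go through $R_f^2-R_{f_0}^2$ and then $|\sqrt a-\sqrt b|\le\sqrt{|a-b|}$; this is a slightly more circuitous route to the same conclusion but introduces no gap.
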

\begin{proof}
By the triangle inequality, using the definition in \eqref{argmin},
\begin{eqnarray}
%\lefteqn{|R_{ f}(\bm\eta)-R_{ f_0}(\bm\eta)|}\nonumber\\
|R_{ f}(\bm\eta)-R_{ f_0}(\bm\eta)|&\leq&\left\|f'(\cdot)-f'_0(\cdot)\right\|_w+\left\|F(\cdot, f(\cdot), \bm\eta)-F(\cdot, f_0(\cdot), \bm\eta)\right\|_w\nonumber\\
&\leq&c_1\sup_{t\in[0,1]}|f'(t)-f'_{0}(t)|+c_2\sup_{t\in[0,1]}|f(t)-f_{0}(t)|,\nonumber
\end{eqnarray}
for appropriately chosen constants $c_1$ and $c_2$. We denote the set $T_n=\{f:\sup_{t\in[0,1]}|f(t)-f_0(t)|\leq\tau_n,\,\sup_{t\in[0,1]}|f'(t)-f'_0(t)|\leq\tau_n\}$ for some $\tau_n\rightarrow0$. By Lemma 2, there exists such a sequence $\{\tau_n\}$ so that $\Pi(T^c_n|\bm Y)=o_{P_0}(1)$. Hence for $f\in T_n$,
\begin{eqnarray}
\sup_{\bm\eta\in\bm\Theta}|R_{f}(\bm\eta)-R_{f_0}(\bm\eta)|\leq (c_1+c_2)\tau_n=o(1)\nonumber
\end{eqnarray}
Therefore, for any $\delta>0$, $\Pi(\sup_{\bm\eta\in\bm\Theta}|R_{f}(\bm\eta)-R_{f_0}(\bm\eta)|>\delta|\bm Y)=o_{P_0}(1)$. By assumption \eqref{assmp}, for $\|\bm\theta-\bm\theta_0\|\geq\epsilon$ there exists a $\delta>0$ such that
\begin{eqnarray}
\delta<R_{f_0}(\bm\theta)-R_{f_0}(\bm\theta_0)
&\leq&R_{f_0}(\bm\theta)-R_{f}(\bm\theta)+R_{f}(\bm\theta_0)-R_{f_0}(\bm\theta_0)\nonumber\\
&\leq&2\sup_{\bm\eta\in\bm\Theta}|R_{f}(\bm\eta)-R_{f_0}(\bm\eta)|,\nonumber
\end{eqnarray}
since $R_{f}(\bm\theta)\leq R_{f}(\bm\theta_0)$. Consequently,
\begin{eqnarray}
\Pi(\|\bm\theta-\bm\theta_0\|>\epsilon|\bm Y)\leq\Pi\left(\sup_{\bm\eta\in\bm\Theta}|R_{f}(\bm\eta)-R_{f_0}(\bm\eta)|>{\delta}/{2}|\bm Y\right)=o_{P_0}(1).\nonumber
\end{eqnarray}
\end{proof}

The asymptotic behavior of the posterior distribution of $\sqrt{n}\bm J^{-1}_{\bm\theta_0}(\bm\Gamma(\bm f)-\bm\Gamma(\bm f_0))$ is given by the next lemma.
\begin{lemma}
Under the conditions of Theorem 2, on a set in the sample space with high true probability, the posterior distribution of $\sqrt{n}\bm J^{-1}_{\bm\theta_0}(\bm\Gamma(\bm f)-\bm\Gamma(\bm f_0))$ assigns most of its mass inside a large compact set.
\end{lemma}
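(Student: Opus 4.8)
The plan is to exploit the exact Gaussianity of the posterior together with two order estimates. By \eqref{linearize}, $\sqrt{n}\bm J^{-1}_{\bm\theta_0}(\bm\Gamma(\bm f)-\bm\Gamma(\bm f_0))=\sqrt{n}\sum_{j=1}^d\bm G_{n,j}^T\bm\beta_j-\sqrt{n}\bm J^{-1}_{\bm\theta_0}\bm\Gamma(\bm f_0)$, and by \eqref{posterior} together with the posterior independence of the $\bm\beta_j$, its posterior law is $N(\sqrt{n}\bm m_n,\,n\sigma^2\bm V_n)$ with
\[
\bm m_n=c_n^{-1}\sum_{j=1}^d\bm G_{n,j}^T(\bm X_n^T\bm X_n)^{-1}\bm X_n^T\bm Y_{,j}-\bm J^{-1}_{\bm\theta_0}\bm\Gamma(\bm f_0),\qquad \bm V_n=c_n^{-1}\sum_{j=1}^d\bm G_{n,j}^T(\bm X_n^T\bm X_n)^{-1}\bm G_{n,j}.
\]
Since a Gaussian family is tight precisely when its means stay bounded and the traces of its covariances stay bounded, it suffices to prove (i) $n\bm V_n=O(1)$ and (ii) $\sqrt{n}\bm m_n=O_{P_0}(1)$. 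Granting these, Chebyshev's inequality applied conditionally on $\bm Y$ gives, for any $L>\|\sqrt n\bm m_n\|$, $\Pi(\|\sqrt{n}\bm J^{-1}_{\bm\theta_0}(\bm\Gamma(\bm f)-\bm\Gamma(\bm f_0))\|>L\mid\bm Y)\le \mathrm{tr}(n\sigma^2\bm V_n)/(L-\|\sqrt n\bm m_n\|)^2$; thus for every $\delta>0$ one first picks $M$ with $P_0(\|\sqrt n\bm m_n\|>M)<\delta$ and then $L=M+\sqrt{\mathrm{tr}(n\sigma^2\bm V_n)/\delta}$, so that on an event of $P_0$-probability at least $1-\delta$ the posterior places mass at least $1-\delta$ on the ball $\{\|\bm v\|\le L\}$, which is the claimed compact set.

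For (i) I would use two B-spline facts. By the Gram-matrix estimates of \citet{zhou2000derivative} (already invoked in the proof of Lemma 2) together with the near-uniformity $\sup_t|Q_n(t)-t|=o(k_n^{-1})$, the eigenvalues of $\bm X_n^T\bm X_n$ are of exact order $n/k_n$, so $\|(\bm X_n^T\bm X_n)^{-1}\|\asymp k_n/n$. Moreover, since each $N_l$ is nonnegative with $\sum_l N_l\equiv 1$ and support of length $\lesssim k_n^{-1}$, Cauchy--Schwarz gives $(\int_0^1 A_{k,j}N_l\,dt)^2\lesssim k_n^{-1}\int_0^1 A_{k,j}^2 N_l\,dt$, whence $\|\bm G_{n,j}\|^2\le\|\bm G_{n,j}\|_F^2=\sum_{k,l}(\int_0^1 A_{k,j}N_l\,dt)^2\lesssim k_n^{-1}\sum_k\int_0^1 A_{k,j}^2\,dt\lesssim k_n^{-1}$, using that $\bm A_{,j}\in L^2[0,1]$ (a consequence of the continuity and boundedness of $\bm J^{-1}_{\bm\theta_0}$, $D_{0,0,1}\bm F$, $D_{0,1,0}\bm F$, $\bm S$ and $w$ along $(t,\bm f_0(t),\bm\theta_0)$). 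Combining, $n\bm V_n\preceq nc_n^{-1}\sum_{j=1}^d\|(\bm X_n^T\bm X_n)^{-1}\|\,\|\bm G_{n,j}\|^2\,\bm I_p\lesssim n\cdot(k_n/n)\cdot k_n^{-1}\cdot d\,\bm I_p=O(1)$.

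For (ii) I would substitute $\bm Y_{,j}=f_{0j}(\bm x)+\bm\varepsilon_{,j}$ and split $\sqrt n\bm m_n$ into a stochastic part $\sqrt{n}c_n^{-1}\sum_j\bm G_{n,j}^T(\bm X_n^T\bm X_n)^{-1}\bm X_n^T\bm\varepsilon_{,j}$ and a deterministic part. The stochastic part is centered under $P_0$ with covariance $nc_n^{-2}\sigma_0^2\sum_j\bm G_{n,j}^T(\bm X_n^T\bm X_n)^{-1}\bm G_{n,j}=O(1)$ by (i), hence it is $O_{P_0}(1)$. For the deterministic part I would choose $\bm\beta_j^\ast$ with $\sup_t|f_{0j}(t)-\bm N^T(t)\bm\beta_j^\ast|=O(k_n^{-m})$ \citep[Theorem XII.4]{de1978practical} and write $(\bm X_n^T\bm X_n)^{-1}\bm X_n^T f_{0j}(\bm x)=\bm\beta_j^\ast+(\bm X_n^T\bm X_n)^{-1}\bm X_n^T(f_{0j}(\bm x)-\bm X_n\bm\beta_j^\ast)$; contracting with $\bm G_{n,j}^T$, the correction term has norm at most $\|\bm G_{n,j}^T(\bm X_n^T\bm X_n)^{-1/2}\|\cdot\|(\bm X_n^T\bm X_n)^{-1/2}\bm X_n^T(f_{0j}(\bm x)-\bm X_n\bm\beta_j^\ast)\|=O(n^{-1/2})\cdot O(\sqrt{n}k_n^{-m})=O(k_n^{-m})$, since $\|\bm G_{n,j}^T(\bm X_n^T\bm X_n)^{-1/2}\|^2=\|\bm G_{n,j}^T(\bm X_n^T\bm X_n)^{-1}\bm G_{n,j}\|=O(n^{-1})$ by (i), the matrix $\bm X_n(\bm X_n^T\bm X_n)^{-1}\bm X_n^T$ is a projection, and $\|f_{0j}(\bm x)-\bm X_n\bm\beta_j^\ast\|=O(\sqrt{n}k_n^{-m})$. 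Next, $\bm G_{n,j}^T\bm\beta_j^\ast=\int_0^1\bm A_{,j}(t)\bm N^T(t)\bm\beta_j^\ast\,dt=\int_0^1\bm A_{,j}(t)f_{0j}(t)\,dt+O(k_n^{-m})$, and $\sum_{j=1}^d\int_0^1\bm A_{,j}(t)f_{0j}(t)\,dt=\bm J^{-1}_{\bm\theta_0}\bm\Gamma(\bm f_0)$ by the very definition of $\bm A(\cdot)$ in Remark 4. Thus the deterministic part equals $\sqrt{n}$ times the sum of these $O(k_n^{-m})$ remainders plus the shrinkage discrepancy $(c_n^{-1}-1)\sum_j\bm G_{n,j}^T(\bm X_n^T\bm X_n)^{-1}\bm X_n^Tf_{0j}(\bm x)=O(k_n/n)$; since $k_n\gg n^{1/2m}$ forces $\sqrt{n}k_n^{-m}=o(1)$ and $k_n\ll n^{1/8}$ forces $\sqrt{n}k_n/n=o(1)$, the deterministic part is $o(1)$, so $\sqrt n\bm m_n=O_{P_0}(1)$.

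I expect the main obstacle to be the deterministic computation in (ii): showing that $\sum_j c_n^{-1}\bm G_{n,j}^T(\bm X_n^T\bm X_n)^{-1}\bm X_n^Tf_{0j}(\bm x)$ agrees with $\bm J^{-1}_{\bm\theta_0}\bm\Gamma(\bm f_0)$ up to $o(n^{-1/2})$ requires controlling simultaneously the spline approximation error $O(k_n^{-m})$, the design-discretization error governed by $\sup_t|Q_n(t)-t|=o(k_n^{-1})$, and the prior-induced shrinkage factor $c_n^{-1}$, and it is precisely the window $n^{1/2m}\ll k_n\ll n^{1/8}$ with $m\ge 5$ that renders every remainder $o(n^{-1/2})$. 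The covariance bound (i) and the concluding Chebyshev step are routine by comparison.
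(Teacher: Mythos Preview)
Your argument is correct and follows the same overall architecture as the paper's proof: both recognize that the posterior of $\sqrt{n}\sum_j\bm G_{n,j}^T\bm\beta_j$ is Gaussian, then control (a) the posterior variance at order $n^{-1}$ and (b) the centering to order $o(n^{-1/2})$ deterministically plus $O_{P_0}(n^{-1/2})$ stochastically. Your treatment of the bias term in (ii), via the spline approximant $\bm\beta_j^\ast$ and the projection bound $\|\bm X_n(\bm X_n^T\bm X_n)^{-1}\bm X_n^T\|\le 1$, is essentially identical to the paper's, which mimics its own Lemma~1.

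The genuine difference is in the variance step (i). You bound $\|\bm G_{n,j}\|_F^2\lesssim k_n^{-1}$ directly by Cauchy--Schwarz against each $N_l$ and the partition-of-unity property, then combine with $\|(\bm X_n^T\bm X_n)^{-1}\|\asymp k_n/n$. This is shorter and more elementary than the paper's route, which introduces a spline interpolant $\tilde A_{k,j}(\cdot)=\bm\alpha_{k,j}^T\bm N(\cdot)$ of $A_{k,j}$, decomposes $\bm G_{n,j}$ into $\tilde{\bm G}_{n,j}$ plus a remainder, and then uses the eigenvalue estimates for $\int_0^1\bm N\bm N^T\,dt$ from \citet{zhou1998local}. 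The payoff of the paper's longer argument is that it also yields the \emph{lower} bound $\mathrm{mineig}\,\bm G_{n,j}^T(\bm X_n^T\bm X_n)^{-1}\bm G_{n,j}\gtrsim n^{-1}$, and this is precisely where the Theorem~2 hypothesis that $\bm B_j=(\!(\langle A_{k,j},A_{k',j}\rangle)\!)$ be nonsingular is consumed. That lower bound is not needed for Lemma~4 as stated (tightness only), but it is needed later in the proof of Theorem~2 to ensure the limiting covariance $\sigma^2\bm\Sigma_n$ is nondegenerate; your argument, by contrast, never invokes the nonsingularity of $\bm B_j$ and so gives only the upper half of what the paper actually extracts here.
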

\begin{proof}
First note that $\bm J^{-1}_{\bm\theta_0}\bm\Gamma(\bm f)=\sum_{j=1}^d\bm G^T_{n,j}\bm\beta_j$ and $\bm J^{-1}_{\bm\theta_0}\bm\Gamma(\bm f_0)=\sum_{j=1}^d\int_0^1\bm A_{,j}(t)f_{j0}(t)dt$, where $\bm A_{,j}(t)$ denotes the $j^{th}$ column of the matrix $\bm A(t)$ as defined in Remark $4$ for $j=1,\ldots,d$. In order to prove the assertion, we will show that $\bm\Var(\bm G_{n,j}^T\bm\beta_j|\bm Y)$ and $\bm\Var_0(\bm\E(\bm G_{n,j}^T\bm\beta_j|\bm Y))$ have all eigenvalues of the order $n^{-1}$ and
\begin{eqnarray}
\max_{1\leq k\leq p}\left|[\bm\E_0(\bm\E(\bm G_{n,j}^T\bm\beta_j|\bm Y))]_k-\int_0^1 A_{k,j}(t)f_{j0}(t)dt\right|&=&o\left({n}^{-1/2}\right),\nonumber
\end{eqnarray}
for $k=1,\ldots,p$, $j=1,\ldots,d$, where $A_{k,j}(t)$ is the $(k,j)^{th}$ element of the matrix $\bm A(t)$.
Let us fix $j\in\{1,\ldots,d\}$. We note that
\begin{eqnarray}
\bm\E(\bm G_{n,j}^T\bm\beta_j|\bm Y)={\left(1+\frac{k_n\sigma^2}{n}\right)}^{-1}\bm G_{n,j}^T{({\bm X^T_n\bm X_n})}^{-1}\bm X^T_n\bm Y_{,j}.\nonumber
\end{eqnarray}
Hence,
\begin{eqnarray}
\bm{\Var}_0(\bm\E(\bm G_{n,j}^T\bm\beta_j|\bm Y))=\sigma_0^2{\left(1+\frac{\sigma^2k_n}{n}\right)}^{-2}\bm G_{n,j}^T{({\bm X^T_n\bm X_n})}^{-1}\bm G_{n,j}\nonumber.
\end{eqnarray}
Also note that
\begin{eqnarray}
\bm\Var(\bm G_{n,j}^T\bm\beta_j|\bm Y)=\sigma^2{\left(1+\frac{\sigma^2k_n}{n}\right)}^{-1}\bm G_{n,j}^T{({\bm X^T_n\bm X_n})}^{-1}\bm G_{n,j}\nonumber.
\end{eqnarray}
If $A_{k,j}(\cdot)\in C^{m^*}\left((0,1)\right)$ for some $1\leq m^*<m$, then by equation $(2)$ of \citet[page 167]{de1978practical}, we have $\sup\{|A_{k,j}(t)-\tilde A_{k,j}(t)|:t\in[0,1]\}=O(k_n^{-1})$, where $\tilde A_{k,j}(\cdot)=\bm \alpha_{k,j}^T\bm N(\cdot)$ and $\bm\alpha^T_{k,j}=(A_{k,j}(t^*_1),\ldots,A_{k,j}(t^*_{k_n+m-1}))$ with appropriately chosen $t^*_1,\ldots,t^*_{k_n+m-1}$.
We can express $\bm G^T_{n,j}(\bm X^T_n\bm X_n)^{-1}\bm G_{n,j}$ as
\begin{eqnarray}
&&(\bm G_{n,j}-\tilde{\bm G}_{n,j})^T(\bm X^T_n\bm X_n)^{-1}(\bm G_{n,j}-\tilde{\bm G}_{n,j})+\tilde{\bm G}^T_{n,j}(\bm X^T_n\bm X_n)^{-1}(\bm G_{n,j}-\tilde{\bm G}_{n,j})\nonumber\\
&&+\tilde{\bm G}^T_{n,j}(\bm X^T_n\bm X_n)^{-1}\tilde{\bm G}_{n,j}+(\bm G_{n,j}-\tilde{\bm G}_{n,j})^T(\bm X^T_n\bm X_n)^{-1}\tilde{\bm G}_{n,j}\nonumber
\end{eqnarray}
where $[\tilde{\bm G}^T_{n,j}]_{k,}=\int_0^1\tilde{A}_{k,j}(t)(\bm N(t))^Tdt$ for $k=1,\ldots,p.$ Let $\tilde{\bm A}=(\!(\tilde A_{k,j})\!)$. We study the asymptotic orders of the eigenvalues of the matrices $\tilde{\bm G}^T_{n,j}(\bm X^T_n\bm X_n)^{-1}\tilde{\bm G}_{n,j}$ and $(\bm G_{n,j}-\tilde{\bm G}_{n,j})^T(\bm X^T_n\bm X_n)^{-1}(\bm G_{n,j}-\tilde{\bm G}_{n,j})$. Note that
\begin{eqnarray}
\bm\alpha^T_{k,j}\int_0^1\bm N(t)\bm N^T(t)dt\,\bm\alpha_{k,j}=\int_0^1\tilde A^2_{k,j}(t)dt
\asymp\|\bm\alpha_{k,j}\|^2k^{-1}_n,\nonumber
\end{eqnarray}
by Lemma 6.1 of \citet{zhou1998local} implying that the eigenvalues of the matrix $\int_0^1\bm N(t)(\bm N(t))^Tdt$ are of order $k^{-1}_n$. Since the eigenvalues of $\left(\bm X^T_n\bm X_n/n\right)$ are of the order $k_n^{-1}$ \citep{zhou1998local}, we have
\begin{eqnarray}
\lefteqn{\mathrm{maxeig}\left(\tilde{\bm G}^T_{n,j}(\bm X^T_n\bm X_n)^{-1}\tilde{\bm G}_{n,j}\right)}\nonumber\\
&\lesssim&\frac{k_n}{n}\mathrm{maxeig}\left(\tilde{\bm G}^T_{n,j}\tilde{\bm G}_{n,j}\right)\nonumber\\
&=&\frac{k_n}{n}\mathrm{maxeig}\left(\int_0^1\tilde{\bm A_{,j}}(t)\bm N^T(t)dt\int_0^1\bm N(t)(\tilde{\bm A_{,j}}(t))^Tdt\right)\nonumber\\
&=&\frac{k_n}{n}\mathrm{maxeig}\left(\left(\bm\alpha_{1,j}\cdots\bm\alpha_{p,j}\right)^T\left(\int_0^1\bm N(t)\bm N^T(t)dt\right)^2\left(\bm\alpha_{1,j}\cdots\bm\alpha_{p,j}\right)\right)\nonumber\\
&\lesssim&\frac{1}{nk_n}\mathrm{maxeig}(\!(\bm\alpha^T_{k,j}\bm\alpha_{l,j})\!)\nonumber\\
&\asymp&\frac{1}{n}\mathrm{maxeig}(\!(\langle A_{k,j}(\cdot),A_{l,j}(\cdot)\rangle)\!)\nonumber\\
&=&\frac{1}{n}\mathrm{maxeig}\left(\bm B_j\right)\asymp\frac{1}{n}.\nonumber
\end{eqnarray}
Similarly, it can be shown that $\mathrm{mineig}\left(\tilde{\bm G}^T_{n,j}(\bm X^T_n\bm X_n)^{-1}\tilde{\bm G}_{n,j}\right)\gtrsim n^{-1}$. Let us denote by $\bm 1_{k_n+m-1}$ the $k_n+m-1$-component vector with all elements $1$. Then for $k=1,\ldots,p$,
\begin{eqnarray}
\lefteqn{\left[(\bm G_{n,j}-\tilde{\bm G}_{n,j})^T(\bm X^T_n\bm X_n)^{-1}(\bm G_{n,j}-\tilde{\bm G}_{n,j})\right]_{k,k}}\nonumber\\
&=&\int_0^1(A_{k,j}(t)-\tilde A_{k,j}(t))(\bm N(t))^Tdt\,{({\bm X^T_n\bm X_n})}^{-1}\int_0^1(A_{k,j}(t)-\tilde A_{k,j}(t))(\bm N(t))dt\nonumber\\
&=&\frac{1}{n}\int_0^1(A_{k,j}(t)-\tilde A_{k,j}(t))(\bm N(t))^Tdt\,{\left({\bm X^T_n\bm X_n}/{n}\right)}^{-1}\int_0^1(A_{k,j}(t)-\tilde A_{kj}(t))\bm N(t)dt\nonumber\\
&\asymp&\frac{k_n}{n}\int_0^1(A_{k,j}(t)-\tilde A_{k,j}(t))(\bm N(t))^Tdt\int_0^1(A_{k,j}(t)-\tilde A_{k,j}(t))\bm N(t)dt\nonumber\\
&\lesssim&\frac{1}{nk_n},\nonumber
\end{eqnarray}
the last step following by the application of the Cauchy-Schwarz inequality and the facts that $\sup\{|A_{k,j}(t)-\tilde A_{k,j}(t)|:t\in[0,1]\}=O(k_n^{-1})$ and $\int_0^1\|\bm N(t)\|^2dt\leq 1$. Thus, the eigenvalues of $(\bm G_{n,j}-\tilde{\bm G}_{n,j})^T(\bm X^T_n\bm X_n)^{-1}(\bm G_{n,j}-\tilde{\bm G}_{n,j})$ are of the order $(nk_n)^{-1}$ or less. Hence, the eigenvalues of $\bm G^T_{n,j}(\bm X^T_n\bm X_n)^{-1}\bm G_{n,j}$ are of the order $n^{-1}$.\\
Similar to the proof of Lemma 1, we can write for the $\bm\beta^*_j$ given in \eqref{spldis},
\begin{eqnarray}
\lefteqn{\sqrt{n}\left|[\bm\E_0(\bm\E(\bm G_{n,j}^T\bm\beta_j|\bm{Y}))]_k-\int_0^1 A_{k,j}(t)f_{j0}(t)dt\right|}\nonumber\\
&\leq&\sqrt{n}\left|{\left(1+\frac{k_n\sigma^2}{n}\right)}^{-1}[\bm G_{n,j}^T\bm\beta^*_j]_k-[\bm G_{n,j}^T\bm\beta^*_j]_k\right|\nonumber\\
&&+\sqrt{n}{\left(1+\frac{k_n\sigma^2}{n}\right)}^{-1}\left|[\bm G_{n,j}^T{(\bm X^T_n\bm X_n)}^{-1}\bm X^T_n(f_{j0}(\bm{x})-\bm X_n\bm\beta^*_j)]_k\right|\nonumber\\
&&+\sqrt{n}\left|\int_0^1 A_{k,j}(t)f_{j0}(t)dt-[\bm G_{n,j}^T\bm\beta^*_j]_k\right|,\nonumber
\end{eqnarray}
where $[\bm G_{n,j}^T\bm\beta^*_j]_k=\int_0^1A_{k,j}(t)f^*_j(t)dt$ and $f^*_j(t)=\bm N^T(t)\bm\beta^*_j$ for $k=1,\ldots,p$. Proceeding in the same way as in the proof of Lemma 1, we can show that each term on the right hand side of the above equation converges to zero. Hence, the proof.
\end{proof}

\bibliographystyle{chicago}
\bibliography{ref}

\end{document}